\newtheorem{thrm}{Theorem}[section]
\newtheorem{cor}[thrm]{Corollary}
\newtheorem{lem}[thrm]{Lemma}
\newtheorem{prop}[thrm]{Proposition}
\theoremstyle{definition}
\newtheorem{defn}[thrm]{Definition}
\newtheorem{exm}[thrm]{Example}
\newtheorem{rem}[thrm]{Remark}
\crefname{thrm}{Theorem}{Theorems}
\crefname{lem}{Lemma}{Lemmas}
\crefname{cor}{Corollary}{Corollaries}
\crefname{prop}{Proposition}{Propositions}
\crefname{defn}{Definition}{Definitions}
\crefname{exm}{Example}{Examples}
\crefname{rem}{Remark}{Remarks}
\crefname{section}{Section}{Sections}
\crefname{equation}{\unskip}{\unskip}
\crefname{enumi}{\unskip}{\unskip}
\DeclareMathOperator{\spn}{span}
\newcommand{\impl}{\Rightarrow}
\newcommand{\lb}{\lambda}
\newcommand{\sg}{\sigma}
\newcommand{\sst}{\subseteq}
\newcommand{\tl}{\tilde}
\begin{document}

	\noindent{\Large  
		Poisson Structures on Finitary Incidence Algebras}\footnote{
		The work is supported by  CNPq     404649/2018-1 and 	302980/2019-9.}

	\
	
	{\bf
		Ivan Kaygorodov$^{a}$ \& Mykola Khrypchenko$^{b}$  \\
		
		\medskip
		
		\medskip
	}
	
	{\tiny

		\smallskip

		$^{a}$ CMCC, Universidade Federal do ABC, Santo Andr\'e, Brazil
		
		\smallskip

		$^{b}$ Departamento de Matem\'atica, Universidade Federal de Santa Catarina,     Brazil

		\
		
		\smallskip
		
		\medskip
		
		E-mail addresses:

		\smallskip

		Ivan Kaygorodov (kaygorodov.ivan@gmail.com)

		\smallskip
		
		Mykola Khrypchenko  
		(nskhripchenko@gmail.com)
		
	}
	
	\ 
	
	\ 
	
	\medskip

	\ 
	
	\noindent {\bf Abstract:}
		We give a full description of the Poisson structures on the finitary incidence algebra $FI(P,R)$ of an arbitrary poset $P$ over a commutative unital ring $R$.
		
	\
	
	\noindent {\bf Keywords}: 
	{\it Poisson structure, finitary incidence algebra, incidence algebra, biderivation.}
	
	\ 
	
	\noindent {\bf MSC2020}: primary 17B63, 16S60; secondary 16W25.

	\section*{Introduction}\label{intro}
	
	The notion of a Poisson bracket has its origin in the works of S.D. Poisson on celestial mechanics of the beginning of the XIX century. Since then, Poisson algebras
	have appeared in several areas of mathematics, such as:  
	Poisson manifolds~\cite{L1}, 
	algebraic geometry~\cite{BLLM,GK04}, 
	noncommutative geometry \cite{ber08},
	operads~\cite{MR},
	quantization theory~\cite{Hue90,Kon03}, 
	quantum
	groups~\cite{Dr87}, 
	and classical and quantum mechanics.
	The study of Poisson algebras also led to other algebraic structures, such as 
	generic Poisson algebras,
	algebras of Jordan brackets and generalized Poisson algebras,
	Gerstenhaber algebras,
	Novikov-Poisson algebras,
	Malcev-Poisson-Jordan algebras, 
	transposed Poisson algebras,
	$n$-ary Poisson algebras, etc.
	
	Poisson algebras have been used to prove the Nagata conjecture about wild automorphisms of the polynomial ring  with three generators \cite{shu04}
	and to describe simple noncommutative Jordan algebras and superalgebras \cite{ps19}.
	The systematic study of noncommutative Poisson algebra structures began in the paper of Kubo \cite{kubo96}.
	He obtained a description of all the Poisson structures on the full and upper triangular matrix algebras, which was then generalized to prime associative algebras in \cite{fale98}.
	Namely, it was proved in \cite{fale98} that any Poisson bracket on a prime noncommutative associative algebra is the commutator bracket multiplied by an element from the extended centroid of the algebra.
	On the other hand, in his next paper, Kubo studied 
	noncommutative Poisson algebra structures on affine Kac-Moody algebras \cite{kubo98}.
	The investigation of Poisson structures on associative algebras continued in some papers of
	Yao,   Ye  and Zhang \cite{yyz07}; Mroczyńska, Jaworska-Pastuszak,   and  Pogorzały  \cite{jpp20,mp18},
	where Poisson structures on finite-dimensional path algebras
	and on canonical algebras were studied.
	Crawley-Boevey  introduced a noncommutative Poisson structure, called an $H_0$-Poisson structure, on the $0$-th cyclic homology of a  noncommutative associative algebra \cite{cb11}
	and showed that an $H_0$-Poisson structure can be induced on the affine moduli space of (semisimple) representations of an associative algebra from a suitable Lie algebra structure on the $0$-th Hochschild homology of the algebra \cite{cb11}.
	The derived noncommutative Poisson bracket on Koszul Calabi-Yau algebras has been studied in \cite{ceey17}.
	Van den Bergh introduced double Poisson algebras in \cite{ber08}, and 
	Van de Weyer described all the double Poisson structures on finite dimensional semi-simple algebras in \cite{vdw08}.
	Recently, the notion of a noncommutative Poisson bialgebra appeared in \cite{lbs20}.

	In this paper we obtain a full description of the Poisson structures on the finitary incidence algebra $FI(P,R)$ of an arbitrary poset $P$ over a commutative unital ring $R$. This class of algebras was introduced in~\cite{KN-fi} as a generalization of incidence algebras. Recently, automorphisms~\cite{Kh-aut}, local automorphisms~\cite{CDH}, derivations~\cite{Khripchenko12}, local derivations~\cite{Khr17-loc}, Jordan derivations~\cite{Khrypchenko16}, Lie derivations~\cite{Zhang-Khrypchenko,KW} and higher derivations~\cite{KKW} of these algebras have been studied.  
	
	In \cref{prelim} we give all the basic definitions and examples which serve as a motivation and establish a connection between our work and~\cite{Benkovic09,yyz07}.
	\cref{gen-lem} of the paper is devoted to some general properties of antisymmetric biderivations defined on an associative ring $R$ with values in an $R$-module $M$. \cref{bider-FI} is the main technical part of the paper. We first describe in \cref{antisymm-bider-of-tl-I} the antisymmetric biderivations of $FI(P,R)$ defined on the subalgebra $\tilde I(P,R)$ and then prove in \cref{descr-of-bider-FI} that any antisymmetric biderivation of $FI(P,R)$ is of the same form. It turns out that any antisymmetric biderivation of $FI(P,R)$ is a Poisson structure, as proved in \cref{Bider-is-Poisson}, which is the main result of \cref{Poisson-FI} and of the present paper in general.
	
	\section{Preliminaries}\label{prelim}
	
	\subsection{Biderivations}
	Let $R$ be an associative ring and $M$ an $R$-bimodule. A biadditive map $B:R^2\to M$ is called a {\it biderivation of $R$ with values in $M$}, if it is a derivation $R\to M$ with respect to each of its two variables, i.e. for all $x,y,z\in R$
	\begin{align}
	B(xy,z)&=B(x,z)y+xB(y,z),\label{B(xy_z)=B(x_z)y+xB(y_z)}\\
	B(x,yz)&=B(x,y)z+yB(x,z).\label{B(x_yz)=B(x_y)z+yB(x_z)}
	\end{align}
	If $M$ is the regular bimodule of $R$, then we say that $B$ is a \textit{biderivation of $R$}. A biderivation $B$ is said to be \textit{antisymmetric} if $B(x,x)=0$ for all $x\in R$ (it follows that $B(x,y)=-B(y,x)$ for all $x,y\in R$). Observe that \cref{B(xy_z)=B(x_z)y+xB(y_z)} is equivalent to \cref{B(x_yz)=B(x_y)z+yB(x_z)}, whenever $B$ is antisymmetric. For any $\lb\in C(R)$ the map 
	\begin{align*}
	B(x,y)=\lb[x,y],    
	\end{align*}
	where $[x,y]$ is the commutator $xy-yx$, is an antisymmetric biderivation of $R$. Such biderivations will be called {\it inner}. 
	
	\subsection{Poisson structures}
	A \textit{Poisson algebra} over a commutative ring $R$ is an $R$-module $A$ with two bilinear operations $\cdot$ and $\{,\}$, such that $(A,\cdot)$ is an associative algebra, $(A,\{,\})$ is a Lie algebra and 
	\begin{align}\label{Leibniz-law}
	\{a,b\cdot c\}=\{a,b\}\cdot c+b\cdot\{a,c\}.
	\end{align}
	Notice that \cref{Leibniz-law} holds if and only if $\{,\}$ is a biderivation of $(A,\cdot)$.
	
	Given an associative $R$-algebra $(A,\cdot)$, a \textit{Poisson structure on $A$} is an $R$-bilinear operation $\{,\}$ such that $(A,\cdot,\{,\})$ is a Poisson algebra. Equivalently, $\{,\}$ is an $R$-bilinear antisymmetric biderivation on $(A,\cdot)$ satisfying the Jacobi identity. For any $\lb\in C(A)$ the inner biderivation $\{a,b\}=\lb[a,b]$ is clearly a Poisson structure on $A$. Following~\cite{yyz07} we call such Poisson structures \textit{standard}. There is a generalization of this notion introduced in the same paper~\cite{yyz07}. Namely, a Poisson structure on $A$ is said to be {\it piecewise standard} if $A$ decomposes into a direct sum $\bigoplus_{i=1}^m A_i$ of indecomposable Lie ideals in a way that $\{a,b\}=\lb_i[a,b]$ for all $a\in A_i$ and $b\in A$, where $\lb_i\in C(A)$.
	
	\subsection{Finitary incidence algebras}
	Let $(P,\le)$ be a partially ordered set and $R$ a commutative unital ring. For any ordered pair of $x\le y$ from $P$ we introduce a symbol $e_{xy}$ and denote by $I(P,R)$ the $R$-module of formal sums
	\begin{align}\label{formal-sum}
	f=\sum_{x\le y}f(x,y)e_{xy},
	\end{align}
	where $f(x,y)\in R$. 
	
	The sum \cref{formal-sum} is called a {\it finitary series}~\cite{KN-fi}, whenever for any pair of $x,y\in P$ with $x<y$ there exists only a finite number of $u,v\in P$, such that $x\le u<v\le y$ and $f(u,v)\ne 0$. Denote by $FI(P,R)\sst I(P,R)$ the set of finitary series. Then $FI(P,R)$ is an $R$-submodule of $I(P,R)$ which is closed under the convolution of the series:
	\begin{align}\label{conv-series}
	fg=\sum_{x\le y}\left(\sum_{x\le z\le y}f(x,z)g(z,y)\right)e_{xy},
	\end{align}
	where $f,g\in FI(P,R)$. This makes $FI(P,R)$ an associative unital $R$-algebra, called the {\it finitary incidence algebra of $P$ over $R$}. The identity element of $FI(P,R)$ is the series $\delta=\sum_{x\in P}1_R e_{xx}$. If $P=\bigsqcup_{i\in I}P_i$ is the decomposition of $P$ into its connected components, then $FI(P,R)\cong\prod_{i\in I}FI(P_i,R)$. It is an easy exercise that the center of $FI(P,R)$ is $R\delta=\{r\delta\mid r\in R\}$, provided that $P$ is connected. If $P$ is locally finite, then $FI(P,R)=I(P,R)$ is the usual incidence algebra.
	
	\subsection{Examples}
	
	It is well-known that the \textit{upper triangular matrix algebra} $T_n(R)$ is the incidence algebra $I(C_n,R)$ of a chain $C_n$ of cardinality $n$. There is a description of (not necessarily antisymmetric) biderivations of $T_n(R)$ given by Benkovi\v{c} in \cite[Corollary 4.13]{Benkovic09} (case $n\ge 3$) and \cite[Proposition 4.16]{Benkovic09} (case $n=2$). A straightforward calculation, based on this description, shows that all the antisymmetric biderivations of $T_n(R)$ are inner. Consequently, we have the following (see also \cite{kubo96}).
	
	\begin{exm}
		All the Poisson structures on $T_n(R)$ are standard.
	\end{exm}

	This is not the case for a general incidence algebra, as the next easy example shows.
	\begin{exm}\label{2-crown}
		Let $R$ be a commutative ring and $P=\{1,2,3,4\}$ with the following Hasse diagram (a crown).
		\begin{center}
			\begin{tikzpicture}[line cap=round,line join=round,>=triangle 45,x=1cm,y=1cm]
			\draw  (-1,-1)-- (1,1);
			\draw  (1,1)-- (1,-1);
			\draw  (1,-1)-- (-1,1);
			\draw  (-1,1)-- (-1,-1);
			\begin{scriptsize}
				\draw [fill=black] (-1,-1) circle (1.5pt);
				\draw[color=black] (-1.25,-1.2) node {$1$};
				\draw [fill=black] (-1,1) circle (1.5pt);
				\draw[color=black] (-1.25,1.2) node {$3$};
				\draw [fill=black] (1,1) circle (1.5pt);
				\draw[color=black] (1.25,1.2) node {$4$};
				\draw [fill=black] (1,-1) circle (1.5pt);
				\draw[color=black] (1.25,-1.2) node {$2$};
			\end{scriptsize}
			\end{tikzpicture}
		\end{center}
		Then for any $\lb,\mu,\nu,\eta\in R$ the bilinear map $B:I(P,R)\times I(P,R)\to I(P,R)$, such that
		\begin{align*}
		B(e_{11},e_{13})=B(e_{13},e_{33})=-B(e_{13},e_{11})=-B(e_{33},e_{13})&=\lb e_{13},\\
		B(e_{11},e_{14})=B(e_{14},e_{44})=-B(e_{14},e_{11})=-B(e_{44},e_{14})&=\mu e_{14},\\
		B(e_{22},e_{23})=B(e_{23},e_{33})=-B(e_{23},e_{22})=-B(e_{33},e_{23})&=\nu e_{23},\\
		B(e_{22},e_{24})=B(e_{24},e_{44})=-B(e_{24},e_{22})=-B(e_{44},e_{24})&=\eta e_{24}
		\end{align*}
		and $B(x,y)=0$ for any other pair $x,y\in\{e_{11},e_{13},e_{14},e_{22},e_{23},e_{24},e_{33},e_{44}\}$,
		defines a Poisson structure on $I(P,R)$. Observe that $B(e_{11},e_{13})=\lb[e_{11},e_{13}]$, while $B(e_{11},e_{14})=\mu[e_{11},e_{14}]$, so, whenever $\lb\ne\mu$, this Poisson structure is not standard. 
	\end{exm}

	 \begin{rem}
	     If $R$ is a field and $\lb,\mu,\nu,\eta$ are pairwise distinct, then the Poisson structure $B$ from \cref{2-crown} is not even piecewise standard.
	 \end{rem}
	
	 \begin{proof}
	     Assume that $B$ is piecewise standard. Then there are indecomposable Lie ideals $A_i$ of $I(P,R)$ and scalars $\lb_i\in R$, $1\le  i\le m$, such that $I(P,R)=\bigoplus_{i=1}^mA_i$ and $B(f,g)=\lb_i[f,g]$ for all $f\in A_i$ and $g\in I(P,R)$. Notice that one of the ideals $A_i$ contains $f$ with $f(1,3)\ne 0$ (otherwise $f(1,3)=0$ for all $f\in I(P,R)$). Let $A_1$ be such an ideal. Then $f(1,3)e_{13}=[[e_{11},f],e_{33}]\in A_1$, so that $e_{13}\in A_1$ and $\lb_1=\lb$. Similarly, one of the ideals $A_i$ contains $e_{14}$. Observe that this cannot be $A_1$, since $B(e_{14},e_{44})=\mu[e_{14},e_{44}]$ and $\lb\ne\mu$. Let $e_{14}\in A_2$, so that $\lb_2=\mu$. Applying the same argument, we may assume that $e_{23}\in A_3$ with $\lb_3=\nu$ and $e_{24}\in A_4$ with $\lb_4=\eta$. Now, if $f\in A_i$ with $i>4$, then $f(1,3)=f(1,4)=f(2,3)=f(2,4)=0$, as $e_{13},e_{14},e_{23},e_{24}\not\in A_i$. Since $(f(1,1)-f(3,3))e_{13}=[f,e_{13}]\in A_i$, then $f(1,1)=f(3,3)$. Similarly, $f(1,1)=f(4,4)$, $f(2,2)=f(3,3)$ and $f(2,2)=f(4,4)$, so $f$ belongs to the center of $I(P,R)$. It follows that $m\le 5$ and, whenever $m=5$, $A_5=R\delta=C(I(P,R))$.
	
	     Now consider $e_{11}\in I(P,R)$ and write $e_{11}=\sum_{i=1}^5 f_i$, where $f_i\in A_i$ and $f_5$ is central (possibly zero). If $f_i(1,3)\ne 0$ for some $i$, then $e_{13}\in A_i$, whence $i=1$, since the sum of $A_i$ is direct. Therefore, $0=e_{11}(1,3)=f_1(1,3)\ne 0$, a contradiction. Thus, $f_i(1,3)=0$ for all $i$. Similarly, $f_i(1,4)=f_i(2,3)=f_i(2,4)=0$ for all $i$. As above, $(f_1(1,1)-f_1(4,4))e_{14}=[f_1,e_{14}]\in A_1$, so $f_1(1,1)=f_1(4,4)$. Analogously, $f_1(2,2)=f_1(3,3)$ and $f_1(2,2)=f_1(4,4)$, and we conclude that $f_1$ is central. By the same argument $f_2$, $f_3$ and $f_4$ are central. Thus, $e_{11}$ is central, a contradiction.
	 \end{proof}
	
	\section{Some general lemmas}\label{gen-lem}
	
	Throughout this section $R$ is an associative ring, $M$ an $R$-bimodule and $B$ is an antisymmetric biderivation of $R$ with values in $M$.
	\begin{lem}\label{B(e_f)=zero}
		For any pair of orthogonal idempotents $e,f\in R$ one has $B(e,f)=0$.
	\end{lem}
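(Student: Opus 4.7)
The plan is to apply the Leibniz rule \cref{B(xy_z)=B(x_z)y+xB(y_z)} to the decomposition $e = e\cdot e$, which expresses $B(e,f)$ as $B(e,f)e + eB(e,f)$. It will then suffice to show that both summands on the right-hand side vanish, relying only on the orthogonality $ef = fe = 0$ together with $B(e,e) = B(f,f) = 0$, which follows from antisymmetry.

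To kill the summand $eB(e,f)$, I would evaluate $B(e, ef)$ in two ways. Since $ef = 0$, the expression is $0$; on the other hand, \cref{B(x_yz)=B(x_y)z+yB(x_z)} expands it as $B(e,e)f + eB(e,f) = eB(e,f)$, so this summand is zero. Symmetrically, to kill $B(e,f)e$, I would evaluate $B(fe, e)$ in two ways: it is $0$ because $fe = 0$, and by \cref{B(xy_z)=B(x_z)y+xB(y_z)} it also equals $B(f,e)e + fB(e,e) = -B(e,f)e$, yielding $B(e,f)e = 0$.

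Combining these two identities with the initial decomposition yields $B(e,f) = 0$, as desired. I do not expect any real obstacle: the argument is a short manipulation of the two Leibniz rules together with antisymmetry, and it requires no assumption about whether $e+f$ is the identity of $R$ or about any additional structure on $R$ or $M$ beyond what is already given.
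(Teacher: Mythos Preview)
Your argument is correct. You use the same key decomposition $B(e,f)=B(e^2,f)=eB(e,f)+B(e,f)e$ as the paper, but then kill the two summands directly: $eB(e,f)=0$ from $0=B(e,ef)$ and $B(e,f)e=0$ from $0=B(fe,e)$, each time using antisymmetry to discard the $B(e,e)$ term. The paper instead first proves $B(e,f)f=0$ (from $B(ef,f)=0$), deduces $B(e,f)=fB(e,f)$ via $f=f^2$, feeds this into the decomposition to obtain $B(e,f)=fB(e,f)e$, and only then concludes by swapping the roles of $e$ and $f$ and multiplying the two localizations together. Your route is shorter and more symmetric; the paper's route has the minor by-product of the intermediate identity $B(e,f)=fB(e,f)e$, but that identity is not reused later, so nothing is lost by your approach.
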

	\begin{proof}
		Since $ef=0$, then 
		\begin{align*}
		0=B(ef,f)=eB(f,f)+B(e,f)f=B(e,f)f.
		\end{align*}
		Therefore, 
		\begin{align*}
		B(e,f)=B(e,f^2)=fB(e,f)+B(e,f)f=fB(e,f).
		\end{align*}
		Hence, 
		\begin{align*}
		B(e,f)=B(e^2,f)=eB(e,f)+B(e,f)e=efB(e,f)+fB(e,f)e=fB(e,f)e.
		\end{align*}
		However, by antisymmetry $B(e,f)=-B(f,e)=-eB(f,e)f$. So, $B(e,f)=-efB(f,e)fe=0$.
	\end{proof}
	
	\begin{lem}\label{B(e_fxg)=fB(e_x)g}
		For any triple of idempotents $e,f,g\in R$, such that any two of them are either equal or orthogonal, and for all $x\in R$ one has $B(e,fxg)=fB(e,x)g$. Moreover, if $e$ is orthogonal to $f$ and $g$, then $B(e,fxg)=0$.
	\end{lem}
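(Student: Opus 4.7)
The plan is to reduce everything to repeated application of the Leibniz rule \cref{B(x_yz)=B(x_y)z+yB(x_z)}, exploiting the fact that under the hypothesis any two of $e,f,g$ are either equal or orthogonal, we always have $B(e,f)=B(e,g)=0$. Indeed, if $e=f$ then $B(e,f)=B(e,e)=0$ by antisymmetry, while if $e$ and $f$ are orthogonal, \cref{B(e_f)=zero} gives the same conclusion; the analogous dichotomy applies to $B(e,g)$.

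For the first assertion, I would expand
\begin{align*}
B(e,fxg)=B(e,f)xg+fB(e,xg)=fB(e,xg)
\end{align*}
using $B(e,f)=0$, and then expand $B(e,xg)$ once more:
\begin{align*}
fB(e,xg)=f\bigl(B(e,x)g+xB(e,g)\bigr)=fB(e,x)g,
\end{align*}
where the second equality uses $B(e,g)=0$. This yields $B(e,fxg)=fB(e,x)g$.

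For the ``moreover'' part, assume that $e$ is orthogonal to both $f$ and $g$, so that $fe=0$ and $eg=0$. By idempotency of $e$ and \cref{B(xy_z)=B(x_z)y+xB(y_z)},
\begin{align*}
B(e,x)=B(e^2,x)=eB(e,x)+B(e,x)e.
\end{align*}
Multiplying by $f$ on the left and $g$ on the right, and using $fe=0=eg$, we obtain $fB(e,x)g=0$, which together with the first part gives $B(e,fxg)=0$.

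I do not expect a serious obstacle: the argument is a two-line Leibniz computation, and the only point worth highlighting is that the ``equal or orthogonal'' hypothesis is exactly what is needed to force $B(e,f)$ and $B(e,g)$ to vanish via \cref{B(e_f)=zero} and antisymmetry simultaneously.
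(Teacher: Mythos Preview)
Your proof is correct and follows essentially the same route as the paper: two applications of the Leibniz rule in the second variable using $B(e,f)=B(e,g)=0$, and then expanding $B(e,x)=B(e^2,x)$ to kill $fB(e,x)g$ under orthogonality. The only (welcome) addition is that you make explicit why $B(e,f)=0$ also in the case $e=f$, via antisymmetry, whereas the paper tacitly subsumes this under \cref{B(e_f)=zero}.
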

	\begin{proof}
		Indeed, using \cref{B(e_f)=zero} we have
		\begin{align*}
		B(e,fxg)&=fB(e,xg)+B(e,f)xg=fB(e,xg)\\
		&=fxB(e,g)+fB(e,x)g=fB(e,x)g.
		\end{align*}
		Moreover, when $fe=eg=0$, we obtain
		\begin{align*}
		fB(e,x)g=fB(e^2,x)g=feB(e,x)g+fB(e,x)eg=0.
		\end{align*}
	\end{proof}
	
	\begin{lem}\label{B(e_exf)=B(exf_f)}
		For any pair idempotents $e,f\in R$ which are either equal or orthogonal and for all $x\in R$ one has $B(e,exf)=B(exf,f)$.
	\end{lem}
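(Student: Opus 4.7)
My plan is to compute $B(e,exf)$ and $B(exf,f)$ separately via the Leibniz rules and the preceding lemmas, and then to show that their difference collapses to a single expression of the form $eB(e+f,x)f$ which vanishes by an auxiliary Peirce-type identity.

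For the first half, I would factor $exf = e\cdot(xf)$ and apply the second-variable Leibniz rule to $B(e,exf)$: using $B(e,e)=0$ and $B(e,xf) = B(e,x)f + xB(e,f)$, and noting that $xB(e,f)=0$ in both cases (for $e=f$ by antisymmetry, for $e,f$ orthogonal by \cref{B(e_f)=zero}), I obtain $B(e,exf) = eB(e,x)f$. An analogous computation using the first-variable Leibniz rule yields $B(exf,f) = eB(x,f)f = -eB(f,x)f$. Subtracting and using bilinearity, the desired equality reduces to showing $eB(e+f,x)f = 0$.

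To finish, I would establish the auxiliary identity $hB(h,y)h=0$ valid for every idempotent $h\in R$ and every $y\in R$: expand $B(h,y) = B(h^2,y) = B(h,y)h + hB(h,y)$ and sandwich with $h$ on both sides, which forces $hB(h,y)h = 2hB(h,y)h$, hence $hB(h,y)h=0$. Then I split cases. If $e=f$, one has $eB(e+f,x)f = 2eB(e,x)e = 0$ directly. If $e,f$ are orthogonal, then $h:=e+f$ is an idempotent with $eh=e$ and $hf=f$, so $eB(h,x)f = e\bigl(hB(h,x)h\bigr)f = 0$. The argument is a routine combination of Leibniz expansions with a basic Peirce-type identity, in the same spirit as the two preceding lemmas, so I do not anticipate any serious obstacle; the only point requiring care is that $e+f$ fails to be idempotent in the equal case, which is why the two cases are handled slightly differently at the very last step.
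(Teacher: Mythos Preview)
Your argument is correct. Both routes pass through \cref{B(e_fxg)=fB(e_x)g} to identify $B(e,exf)=eB(e,x)f$ and $B(exf,f)=-eB(f,x)f$, but they diverge at the crucial step of showing $eB(e,x)f+eB(f,x)f=0$. The paper obtains this in one stroke from $0=B(ef,exf)=eB(f,exf)+B(e,exf)f$, which after simplification via \cref{B(e_fxg)=fB(e_x)g} reads precisely $eB(f,x)f+eB(e,x)f=0$; this is shorter, but it relies on $ef=0$ and so literally treats only the orthogonal case. You instead prove the auxiliary Peirce identity $hB(h,y)h=0$ for an arbitrary idempotent $h$ and apply it to $h=e+f$ (orthogonal case) or $h=e$ (equal case). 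Your approach costs one extra lemma but has the virtue of actually covering the $e=f$ case stated in the hypothesis, where the claim amounts to $B(e,exe)=0$; the paper's one-line argument does not address this, though in the sequel the lemma is only invoked for orthogonal idempotents.
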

	\begin{proof}
		Using $ef=0$ and \cref{B(e_fxg)=fB(e_x)g}, we have
		\begin{align*}
		0=B(ef,exf)&=eB(f,exf)+B(e,exf)f=eB(f,x)f+eB(e,x)f\\
		&=B(f,exf)+B(e,exf)=-B(exf,f)+B(e,exf).
		\end{align*}
	\end{proof}
	
	\begin{lem}\label{B(exf_fyg)=eB(e_x)fyg}
		For any triple of orthogonal idempotents $e,f,g\in R$ and for all $x,y\in R$ one has $B(exf,fyg)=eB(e,x)fyg$.
	\end{lem}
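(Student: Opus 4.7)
The plan is to expand the second argument as $fyg=f\cdot yg$ and apply the derivation rule in the second variable:
\begin{align*}
B(exf,fyg)=B(exf,f)\cdot yg+f\cdot B(exf,yg).
\end{align*}
I expect the first summand to produce the desired expression $eB(e,x)fyg$, and the second summand to vanish entirely.

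For the first summand, since $e$ and $f$ are orthogonal idempotents, \cref{B(e_exf)=B(exf_f)} gives $B(exf,f)=B(e,exf)$. Applying \cref{B(e_fxg)=fB(e_x)g} with the triple $(e,e,f)$ (where the first two are equal and both are orthogonal to the third, so the hypothesis on the triple is met), one obtains $B(e,exf)=eB(e,x)f$. Multiplying on the right by $yg$ gives the contribution $eB(e,x)fyg$.

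For the second summand, first expand $B(exf,yg)=B(exf,y)g+yB(exf,g)$ using the derivation rule. The second term vanishes: by antisymmetry $B(exf,g)=-B(g,exf)$, and the moreover-clause of \cref{B(e_fxg)=fB(e_x)g} yields $B(g,exf)=0$, since $g$ is orthogonal to both $e$ and $f$. It remains to show $fB(exf,y)g=0$. Applying the derivation rule in the first variable twice,
\begin{align*}
B(exf,y)=B(e,y)xf+eB(x,y)f+exB(f,y),
\end{align*}
and multiplying by $f$ on the left and $g$ on the right kills every term, using $fe=0$ in the second and third summands and $fg=0$ in the first summand.

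Combining the two computations yields $B(exf,fyg)=eB(e,x)fyg$. I do not anticipate any real obstacle: the argument is a systematic bookkeeping of the derivation identity together with two applications of the preceding lemmas, with orthogonality of $e,f,g$ forcing the unwanted terms to vanish.
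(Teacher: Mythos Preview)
Your proof is correct and follows essentially the same approach as the paper: expand $B(exf,fyg)$ in the second variable as $B(exf,f)yg+fB(exf,yg)$, identify the first summand via \cref{B(e_exf)=B(exf_f),B(e_fxg)=fB(e_x)g}, and kill the second summand by further expansion and orthogonality. The only cosmetic difference is that the paper expands $B(exf,y)$ once as $eB(xf,y)+B(e,y)xf$ (so that $fe=0$ and $fg=0$ dispose of the two pieces), whereas you expand it fully into three terms; both lead to the same vanishing.
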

	\begin{proof}
		Thanks to \cref{B(e_fxg)=fB(e_x)g,B(e_exf)=B(exf_f)} we have
		\begin{align*}
		B(exf,fyg)&=B(exf,f)yg+fB(exf,yg)=eB(e,x)fyg+fB(exf,yg).
		\end{align*}
		Now, 
		\begin{align*}
		fB(exf,yg)&=fyB(exf,g)+fB(exf,y)g=-fyB(g,exf)+fB(exf,y)g,
		\end{align*}
		the latter being $fB(exf,y)g$ by \cref{B(e_fxg)=fB(e_x)g}. We finally calculate
		\begin{align*}
		fB(exf,y)g&=feB(xf,y)g+fB(e,y)xfg=0.
		\end{align*}
	\end{proof}
	
	\begin{cor}
		For any triple of orthogonal idempotents $e,f,g\in R$ and for all $x,y\in R$ one has $B(exf,gye)=-gB(g,y)exf$.
	\end{cor}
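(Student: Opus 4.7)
My plan is to prove the corollary as a one-line consequence of antisymmetry together with \cref{B(exf_fyg)=eB(e_x)fyg}. The key observation is that although $B(exf,gye)$ does not itself fit the pattern of \cref{B(exf_fyg)=eB(e_x)fyg} (the second argument $gye$ does not begin with the middle idempotent $f$ of the first argument $exf$), swapping the two entries via antisymmetry does match the pattern: in $B(gye,exf)$ the first argument has middle idempotent $e$, and the second argument begins with $e$.

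Concretely, I first rewrite
\begin{align*}
B(exf,gye)=-B(gye,exf)
\end{align*}
using antisymmetry of $B$. Then I apply \cref{B(exf_fyg)=eB(e_x)fyg} with the relabelling $e\mapsto g$, $f\mapsto e$, $g\mapsto f$ (the three idempotents are still pairwise orthogonal) and $x\mapsto y$, $y\mapsto x$, which yields
\begin{align*}
B(gye,exf)=gB(g,y)exf.
\end{align*}
Combining the two equalities gives $B(exf,gye)=-gB(g,y)exf$, as required.

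I expect no real obstacle here: the whole content of the corollary is a bookkeeping of which idempotent plays which role in the preceding lemma, and checking that after the swap the hypotheses (pairwise orthogonality of the three idempotents) are still satisfied, which is trivial since orthogonality is symmetric in the three idempotents.
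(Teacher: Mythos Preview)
Your proof is correct and is essentially identical to the paper's own argument: both use antisymmetry to rewrite $B(exf,gye)=-B(gye,exf)$ and then apply \cref{B(exf_fyg)=eB(e_x)fyg} with the roles of the idempotents permuted to obtain $B(gye,exf)=gB(g,y)exf$. Your only addition is spelling out the relabelling explicitly, which is harmless.
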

	\begin{proof}
		Indeed, $B(exf,gye)=-B(gye,exf)=-gB(g,y)exf$.
	\end{proof}
	
	\begin{lem}\label{B(exf_gyh)=egB(exf_gyh)fh}
		Let $e,f,g,h\in R$ be idempotents, such that any two of them are either equal or orthogonal. If $e$ and $g$ are orthogonal to $f$ and $h$, then for all $x,y\in R$ one has $B(exf,gyh)=egB(exf,gyh)fh$.
	\end{lem}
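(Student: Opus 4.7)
My plan is to establish the four ``absorption'' identities
\begin{align*}
eB(exf, gyh) &= B(exf, gyh) = gB(exf, gyh),\\
B(exf, gyh)f &= B(exf, gyh) = B(exf, gyh)h,
\end{align*}
from which the desired equality follows immediately, since
\begin{align*}
egB(exf, gyh)fh = e\bigl(gB(exf, gyh)f\bigr)h = eB(exf, gyh)h = B(exf, gyh).
\end{align*}

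For the first identity I would apply the Leibniz rule in the first slot to $exf = e \cdot exf$, obtaining
\begin{align*}
B(exf, gyh) = B(e, gyh) \cdot exf + eB(exf, gyh),
\end{align*}
so the task reduces to showing $B(e, gyh) \cdot exf = 0$. The triple $e, g, h$ consists of pairwise equal or orthogonal idempotents, so \cref{B(e_fxg)=fB(e_x)g} applies: if $e$ is orthogonal to both $g$ and $h$ (the case $e \ne g$), the ``moreover'' clause gives $B(e, gyh) = 0$; if $e = g$, the lemma gives $B(e, gyh) = eB(e, y)h$, and its product with $exf$ contains the factor $he = 0$. A symmetric expansion $gyh = g \cdot gyh$ in the second slot, together with $B(exf, g) = -B(g, exf) = -eB(g, x)f$ from \cref{B(e_fxg)=fB(e_x)g} applied to the triple $g, e, f$, yields $gB(exf, gyh) = B(exf, gyh)$, because $B(exf, g) \cdot gyh$ contains the factor $fg = 0$.

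The two right-absorption identities are obtained in the same spirit: $B(exf, gyh)f = B(exf, gyh)$ follows from $exf = exf \cdot f$ and the vanishing of $exf \cdot B(f, gyh)$, which is either $0$ outright (when $f \ne h$) or reduces to $exf \cdot gB(f, y)f$ and so contains the factor $fg = 0$ (when $f = h$); and $B(exf, gyh)h = B(exf, gyh)$ follows from $gyh = gyh \cdot h$ together with $B(exf, h) = -eB(h, x)f$, since $gyh \cdot B(exf, h)$ then contains the factor $he = 0$.

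I do not anticipate a serious obstacle: the proof is a systematic four-fold application of the Leibniz rule combined with \cref{B(e_fxg)=fB(e_x)g}, and the only thing requiring care is carrying the ``equal or orthogonal'' case split through each of the four symmetric computations.
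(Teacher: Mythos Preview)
Your argument is correct and follows essentially the same strategy as the paper: peel off the idempotents one at a time via the Leibniz rule and \cref{B(e_fxg)=fB(e_x)g}. The only difference is organizational. The paper first proves $B(exf,gyh)=gB(exf,gyh)f$ (your identities (2) and (3), obtained exactly as you do, using $fg=0$), and then gets $B(exf,gyh)=eB(exf,gyh)h$ in one stroke by antisymmetry, applying the already-proved identity with the roles of $(e,x,f)$ and $(g,y,h)$ swapped. This sidesteps the case distinctions $e=g$ versus $e\ne g$ and $f=h$ versus $f\ne h$ that you carry out for identities (1) and (4). Your version is slightly longer but equally valid.
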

	\begin{proof}
		Since $g$ is orthogonal to $f$ we have by \cref{B(e_fxg)=fB(e_x)g}
		\begin{align*}
		B(exf,gyh)&=B(exf,g)gyh+gB(exf,gyh)=-eB(g,x)fgyh+gB(exf,gyh)\\
		&=gB(exf,gyh)=gexfB(f,gyh)+gB(exf,gyh)f\\
		&=gexfgB(f,y)h+gB(exf,gyh)f=gB(exf,gyh)f.
		\end{align*}
		On the other hand, since $e$ is orthogonal to $h$, we similarly have
		\begin{align*}
		B(exf,gyh)&=-B(gyh,exf)=-eB(gyh,exf)h=eB(exf,gyh)h.
		\end{align*}
		Hence, $B(exf,gyh)=egB(exf,gyh)fh$.
		
	\end{proof}

	\section{Antisymmetric biderivations of $FI(P,R)$}\label{bider-FI}
	
	For any pair $x\le y$ we shall identify $e_{xy}$ with $1_Re_{xy}\in FI(P,R)$ and denote by $E$ the set $\{e_{xy}\mid x\le y\}\sst FI(P,R)$. Observe that
	\begin{align}\label{e_xy-times-e_uv}
	e_{xy}e_{uv}=\delta_{yu}e_{xv},
	\end{align}
	where $\delta$ is the Kronecker delta. It follows that the elements $e_x:=e_{xx}$ form a set of orthogonal idempotents of $FI(P,R)$. Moreover, for any $f\in FI(P,R)$ one has
	\begin{align}\label{e_x-alpha-e_y}
	e_xf e_y=
	\begin{cases}
	f(x,y)e_{xy}, & x\le y,\\
	0, & x\not\le y.
	\end{cases}
	\end{align}
	
	
	Denote by $\tilde I(P,R)$ be the subalgebra of $FI(P,R)$ generated by $E$. Clearly, $\tl I(P,R)=\spn\{E\}$ as an $R$-module in view of \cref{e_xy-times-e_uv}. We will first deal with biderivations defined on $\tilde I(P,R)$. 
	
	\subsection{Antisymmetric biderivations of $\tilde I(P,R)$ with values in $FI(P,R)$}
	Throughout this subsection we fix an $R$-bilinear antisymmetric biderivation $B$ of $\tilde I(P,R)$ with values in $FI(P,R)$. Clearly, $B$ is uniquely determined by its values on the pairs of elements of $E$.
	
	\subsubsection{The action of $B$ on the standard basis $E$}
	
	\begin{prop}\label{B=lb[]}
		For all $x\le y$ and $u\le v$ we have
		\begin{align}\label{B(e_xy_e_uv)=lb(e_xy_e_uv)[e_xy_e_uv]}
		B(e_{xy},e_{uv})=\lb(e_{xy},e_{uv})[e_{xy},e_{uv}]
		\end{align} 
		for some map $\lb:E^2\to R$.
	\end{prop}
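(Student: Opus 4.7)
The plan is to proceed by case analysis on the relative positions of $x, y, u, v$, using the factorizations $e_{xy} = e_x e_{xy} e_y$ and $e_{uv} = e_u e_{uv} e_v$ together with the lemmas of \cref{gen-lem}. Since the diagonal idempotents $\{e_z : z \in P\}$ are pairwise orthogonal, the hypothesis ``any two are equal or orthogonal'' present in those lemmas is automatically satisfied in every application.

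First I would dispose of pairs of diagonal idempotents: $B(e_x, e_u)$ is zero by \cref{B(e_f)=zero} when $x \ne u$ and by antisymmetry when $x = u$, while $[e_x, e_u] = 0$ in either case. For mixed pairs $B(e_x, e_{uv})$ with $u < v$, I would apply \cref{B(e_fxg)=fB(e_x)g} to the triple $(e_x, e_u, e_v)$: its ``moreover'' clause forces $B(e_x, e_{uv}) = 0$ when $x \notin \{u, v\}$, while its main conclusion gives $B(e_x, e_{xv}) = e_x B(e_x, e_{xv}) e_v$ when $x = u$. By \cref{e_x-alpha-e_y}, this equality forces $B(e_x, e_{xv})$ to be supported at the single coordinate $(x, v)$ and hence to equal $\lambda e_{xv} = \lambda[e_x, e_{xv}]$ for some $\lambda \in R$. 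The case $x = v$ is analogous, producing a scalar multiple of $e_{ux} = -[e_x, e_{ux}]$, and the mirror pairs $B(e_{xy}, e_u)$ follow by antisymmetry.

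The main case is $x < y$ and $u < v$. When $y \ne u$ and $v \ne x$, \cref{B(exf_gyh)=egB(exf_gyh)fh} applied to $(e_x, e_y, e_u, e_v)$ yields $B(e_{xy}, e_{uv}) = e_x e_u B(e_{xy}, e_{uv}) e_y e_v$, which vanishes unless $(x, y) = (u, v)$, in which case antisymmetry finishes the job; the commutator $[e_{xy}, e_{uv}]$ is also zero in all of these subcases. The only non-trivial configurations are $y = u$ (so $x < y < v$) and $v = x$ (so $u < x < y$). In the first, I would apply \cref{B(exf_fyg)=eB(e_x)fyg} with the orthogonal triple $(e_x, e_y, e_v)$ to rewrite $B(e_{xy}, e_{yv}) = e_x B(e_x, e_{xy}) e_y \cdot e_{yv} e_v$, then substitute $B(e_x, e_{xy}) = \mu e_{xy}$ from the previous step to obtain $\mu e_{xv} = \mu[e_{xy}, e_{yv}]$. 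The second configuration is handled symmetrically via the corollary to \cref{B(exf_fyg)=eB(e_x)fyg}, giving a scalar multiple of $e_{uy} = -[e_{xy}, e_{ux}]$.

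The argument is essentially mechanical; the map $\lambda$ is extracted one coordinate at a time from these computations. The only real care needed is to match each configuration with the correct lemma and to verify its orthogonality hypotheses; I do not foresee any genuine conceptual obstacle given the preparatory lemmas of \cref{gen-lem}.
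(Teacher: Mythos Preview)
Your proposal is correct and follows essentially the same approach as the paper: a case analysis on the positions of $x,y,u,v$ that invokes \cref{B(e_f)=zero}, \cref{B(e_fxg)=fB(e_x)g}, \cref{B(exf_fyg)=eB(e_x)fyg}, and \cref{B(exf_gyh)=egB(exf_gyh)fh} in exactly the same places. The only cosmetic differences are that the paper handles the subcase $x=v$ (your ``analogous'' case) by routing through \cref{B(e_exf)=B(exf_f)} rather than applying \cref{B(e_fxg)=fB(e_x)g} directly, and it reduces the $v=x$ subcase in the main case to the $y=u$ subcase via antisymmetry rather than citing the corollary; neither change affects the substance of the argument.
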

	\begin{proof}
		\textbf{Case 1.} $x=y$ and $u=v$. Then $e_{xy}$ and $e_{uv}$ is a pair of orthogonal idempotents, so $[e_{xy},e_{uv}]=0$ and $B(e_{xy},e_{uv})=0$ by \cref{B(e_f)=zero}. Hence, $\lb(e_{xy},e_{uv})$ can be chosen arbitrarily.
		
		\textbf{Case 2.} $x=y$ and $u<v$. 
		
		\textbf{Case 2.1.} $x=u$. Then $[e_{xy},e_{uv}]=[e_x,e_{xv}]=e_{xv}$ and
		\begin{align*}
		B(e_{xy},e_{uv})=B(e_x,e_{xv})=B(e_x,e_xe_{xv}e_v)=e_xB(e_x,e_{xv})e_v=B(e_x,e_{xv})(x,v)e_{xv}
		\end{align*}
		by \cref{B(e_fxg)=fB(e_x)g,e_x-alpha-e_y}. Hence, $\lb(e_{xy},e_{uv})=B(e_x,e_{xv})(x,v)$. 
		
		\textbf{Case 2.2.} $x=v$. Then $[e_{xy},e_{uv}]=[e_x,e_{ux}]=-e_{ux}$ and
		\begin{align*}
		B(e_{xy},e_{uv})&=B(e_x,e_{ux})=-B(e_{ux},e_x)=-B(e_ue_{ux}e_x,e_x)=-B(e_u,e_ue_{ux}e_x)\\
		&=-e_uB(e_u,e_{ux})e_x=-B(e_u,e_{ux})(u,x)e_{ux}
		\end{align*}
		by \cref{B(e_fxg)=fB(e_x)g,B(e_exf)=B(exf_f),e_x-alpha-e_y}. Hence, $\lb(e_{xy},e_{uv})=B(e_u,e_{ux})(u,x)$. 
		
		\textbf{Case 2.3.} $x\not\in\{u,v\}$. Then $[e_{xy},e_{uv}]=0$ and 
		\begin{align*}
		B(e_{xy},e_{uv})=B(e_x,e_{uv})=B(e_x,e_ue_{uv}e_v)=0
		\end{align*}
		by \cref{B(e_fxg)=fB(e_x)g}. Hence, $\lb(e_{xy},e_{uv})$ can be chosen arbitrarily.
		
		\textbf{Case 3.} $x<y$ and $u=v$. We have $[e_{xy},e_{uv}]=[e_{xy},e_u]=-[e_u,e_{xy}]$ and $B(e_{xy},e_{uv})=-B(e_u,e_{xy})$, so this case reduces to Case 2.
		
		\textbf{Case 4.} $x<y$ and $u<v$.
		
		\textbf{Case 4.1.} $y=u$. Then $x<v$ and $[e_{xy},e_{uv}]=[e_{xy},e_{yv}]=e_{xv}$. We have
		\begin{align*}
		B(e_{xy},e_{uv})&=B(e_{xy},e_{yv})=B(e_xe_{xy}e_y,e_ye_{yv}e_v)=e_xB(e_x,e_{xy})e_ye_{yv}e_v\\
		&=B(e_x,e_{xy})(x,y)e_{xv}
		\end{align*}
		by \cref{B(exf_fyg)=eB(e_x)fyg}. Hence, $\lb(e_{xy},e_{uv})=B(e_x,e_{xy})(x,y)$.
		
		\textbf{Case 4.2.} $x=v$. Then $[e_{xy},e_{uv}]=[e_{xy},e_{ux}]=-[e_{ux},e_{xy}]$ and $B(e_{xy},e_{uv})=-B(e_{ux},e_{xy})$, so this case reduces to Case 4.1.
		
		\textbf{Case 4.3.} $y\ne u$ and $x\ne v$. Then $[e_{xy},e_{uv}]=0$ and
		\begin{align*}
		B(e_{xy},e_{uv})&=B(e_xe_{xy}e_y,e_ue_{uv}e_v)=e_xe_uB(e_{xy},e_{uv})e_ye_v
		\end{align*}
		by \cref{B(exf_gyh)=egB(exf_gyh)fh}. If $x\ne u$ or $y\ne v$, then $B(e_{xy},e_{uv})=0$, as $e_x$ is orthogonal to $e_u$ or $e_y$ is orthogonal to $e_v$. If $x=u$ and $y=v$, then $B(e_{xy},e_{uv})=B(e_{xy},e_{xy})=0$, since $B$ is antisymmetric.
	\end{proof}
	
	\begin{rem}\label{lb-symm}
		Let $x\le y$ and $u\le v$ such that $[e_{xy},e_{uv}]\ne 0$. Then $\lb(e_{xy},e_{uv})=\lb(e_{uv},e_{xy})$, so $\lb$ can be assumed to be symmetric.
	\end{rem}
	\begin{proof}
		Indeed, this follows from the antisymmetry of $B$: $\lb(e_{xy},e_{uv})[e_{xy},e_{uv}]=B(e_{xy},e_{uv})=-B(e_{uv},e_{xy})=-\lb(e_{uv},e_{xy})[e_{uv},e_{xy}]=\lb(e_{uv},e_{xy})[e_{xy},e_{uv}]$.
	\end{proof}
	
	\begin{lem}\label{properties-of-lb}
		Let $\lb:E^2\to R$ be the map from \cref{B=lb[]}. Then
		\begin{enumerate}
			\item for all $x<y$ one has $\lb(e_x,e_{xy})=\lb(e_{xy},e_y)$;\label{lb(e_x_e_xy)=lb(e_xy_e_y)}
			\item for all $x<y<z$ one has $\lb(e_{xy},e_{yz})=\lb(e_x,e_{xy})$;\label{lb(e_xy_e_yz)=lb(e_x_e_xy)}
			\item for all $x\le y<z<u$ one has $\lb(e_{xy},e_{yz})=\lb(e_{xy},e_{yu})$;\label{lb(e_xy_e_yz)=lb(e_xy_e_yu)}
			\item for all $x<y<z\le u$ one has $\lb(e_{yz},e_{zu})=\lb(e_{xz},e_{zu})$.\label{lb(e_yz_e_zu)=lb(e_xz_e_zu)}
		\end{enumerate}
	\end{lem}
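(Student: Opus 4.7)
The plan is to read items (i) and (ii) directly off the explicit formulas for $\lb$ established in the case analysis of \cref{B=lb[]}, and to deduce items (iii) and (iv) by applying the biderivation identity to the product decompositions $e_{yu}=e_{yz}e_{zu}$ and $e_{xz}=e_{xy}e_{yz}$, respectively, showing in each case that one of the resulting terms vanishes.

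For (i), I observe that Case 2.1 of the proof of \cref{B=lb[]} yields $\lb(e_x,e_{xy})=B(e_x,e_{xy})(x,y)$, while Case 2.2, applied to $B(e_y,e_{xy})$, yields $\lb(e_y,e_{xy})=B(e_x,e_{xy})(x,y)$. Since $[e_x,e_{xy}]=e_{xy}\ne 0$, \cref{lb-symm} gives $\lb(e_{xy},e_y)=\lb(e_y,e_{xy})$, and (i) follows. For (ii), Case 4.1 explicitly produces $\lb(e_{xy},e_{yz})=B(e_x,e_{xy})(x,y)$, which matches the value of $\lb(e_x,e_{xy})$ from Case 2.1.

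For (iii), I apply \cref{B(x_yz)=B(x_y)z+yB(x_z)} to the factorisation $e_{yu}=e_{yz}e_{zu}$:
\begin{align*}
B(e_{xy},e_{yu})=B(e_{xy},e_{yz})e_{zu}+e_{yz}B(e_{xy},e_{zu}).
\end{align*}
The key subclaim is that $B(e_{xy},e_{zu})=0$. When $x<y$, this is Case 4.3 of \cref{B=lb[]}, since $y\ne u$ (as $y<z<u$) and $x\ne u$ (as $x\le y<u$), so the idempotents are pairwise orthogonal or equal in the required way. When $x=y$, this is Case 2.3 applied to $B(e_x,e_{zu})$, since $x\ne z$ and $x\ne u$. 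Substituting $B(e_{xy},e_{yz})=\lb(e_{xy},e_{yz})e_{xz}$ and $B(e_{xy},e_{yu})=\lb(e_{xy},e_{yu})e_{xu}$ and using $e_{xz}e_{zu}=e_{xu}$ yields the equality of the scalars. For (iv), I proceed symmetrically, applying \cref{B(xy_z)=B(x_z)y+xB(y_z)} to $e_{xz}=e_{xy}e_{yz}$:
\begin{align*}
B(e_{xz},e_{zu})=B(e_{xy},e_{zu})e_{yz}+e_{xy}B(e_{yz},e_{zu}).
\end{align*}
Again the first term vanishes: if $z<u$ this is Case 4.3, and if $z=u$ it reduces via antisymmetry to Case 2.3, since $z\notin\{x,y\}$. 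Comparing coefficients of $e_{xu}$ gives the claim.

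I do not expect a real obstacle here: every assertion is produced by either quoting a formula from the proof of \cref{B=lb[]} or by one application of the Leibniz rule followed by a vanishing argument. The only point requiring care is the case split in (iii) and (iv) between $x=y$ (respectively $z=u$) and the strict inequality, which determines whether the relevant vanishing comes from Case 2.3 or Case 4.3.
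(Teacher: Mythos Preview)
Your proof is correct and follows essentially the same approach as the paper: items (i)--(ii) are read off the case analysis of \cref{B=lb[]} (the paper phrases (i) via \cref{B(e_exf)=B(exf_f)}, but that is precisely what underlies Case~2.2), and items (iii)--(iv) use the same Leibniz-rule expansion of $B(e_{xy},e_{yz}e_{zu})$ and $B(e_{xy}e_{yz},e_{zu})$ with the same vanishing term, the paper simply citing $[e_{xy},e_{zu}]=0$ and \cref{B=lb[]} where you spell out Cases~2.3/4.3. One small slip: to invoke \cref{lb-symm} for $\lb(e_{xy},e_y)=\lb(e_y,e_{xy})$ you need $[e_{xy},e_y]=e_{xy}\ne 0$, not $[e_x,e_{xy}]$.
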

	\begin{proof}
		Item \cref{lb(e_x_e_xy)=lb(e_xy_e_y)} follows from \cref{B(e_exf)=B(exf_f)}, since $B(e_x,e_{xy})=B(e_x,e_xe_{xy}e_y)$ and $B(e_{xy},e_y)=B(e_xe_{xy}e_y,e_y)$.
		
		Item \cref{lb(e_xy_e_yz)=lb(e_x_e_xy)} follows from \cref{B(exf_fyg)=eB(e_x)fyg} (see Cases 2.1 and 4.1 of \cref{B=lb[]}).
		
		Item \cref{lb(e_xy_e_yz)=lb(e_xy_e_yu)} is proved by observing that $B(e_{xy},e_{zu})=0$ as $[e_{xy},e_{zu}]=0$, so
		\begin{align*}
		\lb(e_{xy},e_{yu})e_{xu}&=\lb(e_{xy},e_{yu})[e_{xy},e_{yu}]=	B(e_{xy},e_{yu})=B(e_{xy},e_{yz}e_{zu})\\
		&=e_{yz}B(e_{xy},e_{zu})+B(e_{xy},e_{yz})e_{zu}=\lb(e_{xy},e_{yz})[e_{xy},e_{yz}]e_{zu}\\
		&=\lb(e_{xy},e_{yz})e_{xz}e_{zu}=\lb(e_{xy},e_{yz})e_{xu}.
		\end{align*}
		
		Item \cref{lb(e_yz_e_zu)=lb(e_xz_e_zu)} is proved similarly to item \cref{lb(e_xy_e_yz)=lb(e_xy_e_yu)} by considering $B(e_{xz},e_{zu})=B(e_{xy}e_{yz},e_{zu})$.
	\end{proof}
	
	\begin{lem}\label{lb-constant-on-chains}
		Let $\lb:E^2\to R$ be a symmetric map. Then conditions \cref{lb(e_x_e_xy)=lb(e_xy_e_y),lb(e_xy_e_yz)=lb(e_x_e_xy),lb(e_xy_e_yz)=lb(e_xy_e_yu),lb(e_yz_e_zu)=lb(e_xz_e_zu)} of \cref{properties-of-lb} are equivalent to the following property: for any chain $C\sst P$ and for all $x\le y$, $x'\le y'$, $u\le v$, $u'\le v'$ from $C$
		\begin{align}\label{lb(e_xy_e_uv)=lb(e_uv_e_u'v')}
		[e_{xy},e_{x'y'}]\ne 0\ \&\ [e_{uv},e_{u'v'}]\ne 0\ \impl\ \lb(e_{xy},e_{x'y'})=\lb(e_{uv},e_{u'v'}).
		\end{align}
	\end{lem}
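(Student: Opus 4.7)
The plan is to handle the two implications separately.

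I expect the direction \cref{lb(e_xy_e_uv)=lb(e_uv_e_u'v')} $\Rightarrow$ \cref{lb(e_x_e_xy)=lb(e_xy_e_y),lb(e_xy_e_yz)=lb(e_x_e_xy),lb(e_xy_e_yz)=lb(e_xy_e_yu),lb(e_yz_e_zu)=lb(e_xz_e_zu)} to be immediate: each of the four conditions concerns two pairs of basis elements drawn from a chain of at most four elements, all with non-zero commutators (for instance $[e_x,e_{xy}]=e_{xy}$ and $[e_{xy},e_y]=e_{xy}$ in \cref{lb(e_x_e_xy)=lb(e_xy_e_y)}), so \cref{lb(e_xy_e_uv)=lb(e_uv_e_u'v')} applies directly, using the symmetry of $\lb$ whenever a pair must be flipped.

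For the converse, I fix a chain $C\sst P$ and first classify which pairs $(e_{xy},e_{x'y'})$ with endpoints in $C$ have non-zero commutator: using \cref{e_xy-times-e_uv}, up to swapping (and hence symmetry of $\lb$), every such pair has the form $(e_{xy},e_{yz})$ with $x\le y\le z$ in $C$ and $(x,y,z)$ non-constant. Writing $\mu(x,y,z):=\lb(e_{xy},e_{yz})$, the task reduces to showing that $\mu$ is constant on such triples.

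The main argument splits into two stages. \emph{Stage 1:} reduce to triples of the form $(x,x,y)$. If $x<y<z$, then \cref{lb(e_xy_e_yz)=lb(e_x_e_xy)} gives $\mu(x,y,z)=\mu(x,x,y)$; if $x<y=z$, then \cref{lb(e_x_e_xy)=lb(e_xy_e_y)} combined with symmetry of $\lb$ gives $\mu(x,y,y)=\mu(x,x,y)$; and the case $x=y<z$ is already in this form. \emph{Stage 2:} vary the endpoints. Applying \cref{lb(e_xy_e_yz)=lb(e_xy_e_yu)} with $x=y$ yields $\mu(x,x,z)=\mu(x,x,u)$ whenever $x<z,u$ in $C$, so the right endpoint may be changed freely; applying \cref{lb(e_yz_e_zu)=lb(e_xz_e_zu)} with $z=u$, combined with \cref{lb(e_x_e_xy)=lb(e_xy_e_y)} on both sides, yields $\mu(y,y,z)=\mu(x,x,z)$ whenever $x<y<z$ in $C$, so the left endpoint may be changed freely as well.

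To conclude, given pairs $x<y$ and $a<b$ in $C$, I pick $c:=\max(y,b)\in C$, so that $c>x$ and $c>a$, and chain the two rules:
\begin{align*}
\mu(x,x,y)=\mu(x,x,c)=\mu(a,a,c)=\mu(a,a,b),
\end{align*}
the middle equality being the left-endpoint rule (trivial if $x=a$, otherwise applied to the ordered pair $\min(x,a)<\max(x,a)<c$). I expect the only delicate point to be the careful case analysis in Stage 1 and the choice of intermediate $c$ in the last step, but both are essentially automatic because $C$ is totally ordered.
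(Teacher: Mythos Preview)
Your proposal is correct and follows essentially the same approach as the paper. Both proofs treat the implication \cref{lb(e_xy_e_uv)=lb(e_uv_e_u'v')} $\Rightarrow$ \cref{lb(e_x_e_xy)=lb(e_xy_e_y),lb(e_xy_e_yz)=lb(e_x_e_xy),lb(e_xy_e_yz)=lb(e_xy_e_yu),lb(e_yz_e_zu)=lb(e_xz_e_zu)} as immediate, then reduce every relevant pair to the form $\lb(e_x,e_{xy})$ (your $\mu(x,x,y)$) via the same case analysis, and finally use \cref{lb(e_xy_e_yz)=lb(e_xy_e_yu),lb(e_x_e_xy)=lb(e_xy_e_y),lb(e_yz_e_zu)=lb(e_xz_e_zu)} to move the endpoints. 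The only cosmetic difference is in the last step: the paper takes $z=\min\{x,u\}$ and $w=\max\{y,v\}$ and shows both $\lb(e_x,e_{xy})$ and $\lb(e_u,e_{uv})$ equal $\lb(e_{zw},e_w)$ in one chain, whereas you isolate the ``change right endpoint'' and ``change left endpoint'' rules and compose them through $c=\max(y,b)$. (Incidentally, in your Stage~1 case $x<y=z$ the appeal to symmetry is unnecessary: \cref{lb(e_x_e_xy)=lb(e_xy_e_y)} alone gives $\mu(x,y,y)=\lb(e_{xy},e_y)=\lb(e_x,e_{xy})=\mu(x,x,y)$.)
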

	\begin{proof}
		Clearly \cref{lb(e_xy_e_uv)=lb(e_uv_e_u'v')} implies \cref{lb(e_x_e_xy)=lb(e_xy_e_y),lb(e_xy_e_yz)=lb(e_x_e_xy),lb(e_xy_e_yz)=lb(e_xy_e_yu),lb(e_yz_e_zu)=lb(e_xz_e_zu)}.
		
		Conversely, let $x\le y$ and $x'\le y'$ such that $[e_{xy},e_{x'y'}]\ne 0$. If $x<y=x'$, then $\lb(e_{xy},e_{x'y'})=\lb(e_x,e_{xy})$ by \cref{properties-of-lb}\cref{lb(e_xy_e_yz)=lb(e_x_e_xy)}. If $x'<y'=x$, then by the symmetry of $\lb$ and the result of the previous case $\lb(e_{xy},e_{x'y'})=\lb(e_{x'y'},e_{xy})=\lb(e_{x'},e_{x'y'})$. If $x=y=x'<y'$, then we immediately obtain $\lb(e_{xy},e_{x'y'})=\lb(e_{x'},e_{x'y'})$. Finally, if $x<y=x'=y'$, then $\lb(e_{xy},e_{x'y'})=\lb(e_{x'y'},e_{y'})=\lb(e_{x'},e_{x'y'})$ thanks to \cref{properties-of-lb}\cref{lb(e_x_e_xy)=lb(e_xy_e_y)}. Thus, the proof of \cref{lb(e_xy_e_uv)=lb(e_uv_e_u'v')} reduces to the following: for any chain $C\sst P$ and for all $x<y$, $u<v$ from $C$
		\begin{align}\label{lb(e_x_e_xy)=lb(e_u_e_uv)}
		\lb(e_x,e_{xy})=\lb(e_u,e_{uv}).
		\end{align}
		Let $z=\min\{x,u\}$ and $w=\max\{y,v\}$. Notice that such elements exist because $x,u\in C$ and $y,v\in C$. Then thanks to \cref{lb(e_xy_e_yz)=lb(e_xy_e_yu),lb(e_x_e_xy)=lb(e_xy_e_y),lb(e_yz_e_zu)=lb(e_xz_e_zu)}
		\begin{align*}
		\lb(e_x,e_{xy})=\lb(e_x,e_{xw})=\lb(e_{xw},e_w)=\lb(e_{zw},e_w).
		\end{align*}
		Analogously, $\lb(e_u,e_{uv})=\lb(e_{zw},e_w)$, whence \cref{lb(e_x_e_xy)=lb(e_u_e_uv)}.
	\end{proof}
	
	\begin{prop}\label{lb-const-on-chains}
		Let $\lb:E^2\to R$ be a symmetric map. Then the bilinear map $B$ given by \cref{B(e_xy_e_uv)=lb(e_xy_e_uv)[e_xy_e_uv]} is an antisymmetric biderivation of $\tilde I(P,R)$ with values in $FI(P,R)$ if and only if \cref{lb(e_xy_e_uv)=lb(e_uv_e_u'v')} holds for any chain $C\subseteq P$.
	\end{prop}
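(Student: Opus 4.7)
The plan is to prove the forward direction by directly composing the preceding results and the reverse by a case analysis backed by the chain hypothesis. For the forward direction, suppose $B$ is an antisymmetric biderivation of $\tl I(P,R)$ with values in $FI(P,R)$. Then \cref{B=lb[]} produces a map $\lb:E^2\to R$ satisfying \cref{B(e_xy_e_uv)=lb(e_xy_e_uv)[e_xy_e_uv]}, which can be taken symmetric by \cref{lb-symm}. \cref{properties-of-lb} then supplies its four structural identities, and \cref{lb-constant-on-chains} repackages them as the chain condition \cref{lb(e_xy_e_uv)=lb(e_uv_e_u'v')}. So this direction costs nothing new.

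For the converse, I would start with a symmetric $\lb$ satisfying the chain condition, extend $B$ bilinearly from $E^2$ to $\tl I(P,R)^2$, and first verify antisymmetry: writing $a=\sum r_{xy}e_{xy}$, the diagonal terms in $B(a,a)$ vanish because $[e_{xy},e_{xy}]=0$, while the off-diagonal terms cancel in pairs by the symmetry of $\lb$ combined with $[e_{xy},e_{uv}]=-[e_{uv},e_{xy}]$. Once antisymmetry is available, identities \cref{B(xy_z)=B(x_z)y+xB(y_z)} and \cref{B(x_yz)=B(x_y)z+yB(x_z)} become equivalent, so it suffices to verify the first one; by bilinearity this reduces to checking
\begin{align*}
B(e_{xy}e_{uv},e_{pq})=B(e_{xy},e_{pq})e_{uv}+e_{xy}B(e_{uv},e_{pq})
\end{align*}
for every triple $e_{xy},e_{uv},e_{pq}\in E$.

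The verification will split according to whether $y=u$ (so the left-hand side equals $\lb(e_{xv},e_{pq})[e_{xv},e_{pq}]$) or $y\ne u$ (so the left-hand side is zero). In each case I would enumerate the few matchings of endpoints for which the individual commutators $[e_{xy},e_{pq}]$, $[e_{uv},e_{pq}]$, and, where relevant, $[e_{xv},e_{pq}]$ are nonzero and still survive multiplication by $e_{uv}$ or $e_{xy}$ on the appropriate side. The key observation — and the reason the hypothesis is phrased in terms of chains — is that whenever two such contributions are simultaneously nonzero at a common matrix position, the indices involved are pairwise comparable and hence form a chain in $P$; the chain condition then forces the corresponding values of $\lb$ to coincide and the identity balances. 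I expect the main obstacle to be the subcase $y=u$ in which $p,q$ interact with $x,y,v$ so that all three of $\lb(e_{xv},e_{pq})$, $\lb(e_{xy},e_{pq})$, $\lb(e_{yv},e_{pq})$ enter and must agree simultaneously — this is precisely where the chain hypothesis pulls its weight. The remainder is patient bookkeeping of endpoint coincidences.
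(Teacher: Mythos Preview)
Your proposal is correct and mirrors the paper's own proof: the forward direction is exactly the concatenation of \cref{B=lb[]}, \cref{lb-symm}, \cref{properties-of-lb}, and \cref{lb-constant-on-chains}, and for the converse the paper also reduces by bilinearity and antisymmetry to the single basis identity $B(e_{xy}e_{zw},e_{uv})=e_{xy}B(e_{zw},e_{uv})+B(e_{xy},e_{uv})e_{zw}$, then splits on whether $y=z$ and enumerates the endpoint coincidences, invoking the chain condition precisely in the subcases where two surviving $\lb$-values at comparable indices must coincide. One small inaccuracy: you will never actually need three $\lb$-values to agree simultaneously in a single subcase---in each nontrivial case at most two survive and the chain condition equates that pair---so the bookkeeping is a bit lighter than you anticipate.
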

	\begin{proof}
		The ``only if'' part has already been proved in \cref{properties-of-lb,lb-constant-on-chains}.
		
		Conversely, let $B$ be a bilinear map $\tilde I(P,R)^2\to FI(P,R)$ given on the pairs from $E$ by formula \cref{B(e_xy_e_uv)=lb(e_xy_e_uv)[e_xy_e_uv]}, where $\lb$ is symmetric and satisfies \cref{lb(e_xy_e_uv)=lb(e_uv_e_u'v')}. It follows from the symmetry of $\lb$ that $B$ is antisymmetric. Let $x\le y$, $z\le w$ and $u\le v$. We are going to prove
		\begin{align}\label{B(e_xye_zw_e_uv)=e_xyB(e_zw_e_uv)+B(e_xy_e_uv)e_zw}
		B(e_{xy}e_{zw},e_{uv})=e_{xy}B(e_{zw},e_{uv})+B(e_{xy},e_{uv})e_{zw}.
		\end{align}
		
		\textbf{Case 1.} $y=z$. Then $e_{xy}e_{zw}=e_{xw}$, so $B(e_{xy}e_{zw},e_{uv})=B(e_{xw},e_{uv})$.
		
		\textbf{Case 1.1.} $x=w=u=v$. Then both sides of \cref{B(e_xye_zw_e_uv)=e_xyB(e_zw_e_uv)+B(e_xy_e_uv)e_zw} are zero, as $[e_{xy}e_{zw},e_{uv}]=[e_{zw},e_{uv}]=[e_{xy},e_{uv}]=0$. 
		
		\textbf{Case 1.2.} $x=w=u<v$. Then
		\begin{align*}
		e_{xy}B(e_{zw},e_{uv})+B(e_{xy},e_{uv})e_{zw}&=e_x\lb(e_x,e_{xv})[e_x,e_{xv}]+\lb(e_x,e_{xv})[e_x,e_{xv}]e_x\\
		&=\lb(e_x,e_{xv})(e_xe_{xv}+e_{xv}e_x)=\lb(e_x,e_{xv})e_{xv}\\
		&=\lb(e_x,e_{xv})[e_x,e_{xv}]=B(e_{xw},e_{uv}).
		\end{align*} 
		
		\textbf{Case 1.3.} $u<v=x=w$. Then
		\begin{align*}
		e_{xy}B(e_{zw},e_{uv})+B(e_{xy},e_{uv})e_{zw}&=e_x\lb(e_x,e_{ux})[e_x,e_{ux}]+\lb(e_x,e_{ux})[e_x,e_{ux}]e_x\\
		&=\lb(e_x,e_{ux})(-e_xe_{ux}-e_{ux}e_x)=-\lb(e_x,e_{ux})e_{ux}\\
		&=\lb(e_x,e_{ux})[e_x,e_{ux}]=B(e_{xw},e_{uv}).
		\end{align*}
		
		\textbf{Case 1.4.} $u<v$ and $x=w\not\in\{u,v\}$. Then $[e_{xy}e_{zw},e_{uv}]=[e_{zw},e_{uv}]=[e_{xy},e_{uv}]=0$.
		
		\textbf{Case 1.5.} $x<w=u=v$. Then
		\begin{align*}
		e_{xy}B(e_{zw},e_{uv})+B(e_{xy},e_{uv})e_{zw}&=e_{xy}B(e_{yu},e_u)+B(e_{xy},e_u)e_{yu}.
		\end{align*}
		If $y=u$, then
		\begin{align*}
		e_{xy}B(e_{yu},e_u)+B(e_{xy},e_u)e_{yu}&=e_{xy}B(e_y,e_y)+B(e_{xy},e_y)e_y\\
		&=\lb(e_{xy},e_y)[e_{xy},e_y]e_y=\lb(e_{xy},e_y)e_{xy}\\
		&=\lb(e_{xy},e_y)[e_{xy},e_y]=B(e_{xw},e_{uv}).
		\end{align*} 
		Otherwise, 
		\begin{align*}
		e_{xy}B(e_{yu},e_u)+B(e_{xy},e_u)e_{yu}&=e_{xy}\lb(e_{yu},e_u)[e_{yu},e_u]+\lb(e_{xy},e_u)[e_{xy},e_u]e_{zu}\\
		&=\lb(e_{yu},e_u)e_{xy}e_{yu}=\lb(e_{yu},e_u)e_{xu},
		\end{align*} 
		where the latter equals $\lb(e_{xu},e_u)e_{xu}=\lb(e_{xu},e_u)[e_{xu},e_u]=B(e_{xw},e_{uv})$ thanks to \cref{lb(e_xy_e_uv)=lb(e_uv_e_u'v')}.
		
		\textbf{Case 1.6.} $u=v=x<w$. Then
		\begin{align*}
		e_{xy}B(e_{zw},e_{uv})+B(e_{xy},e_{uv})e_{zw}&=e_{xy}B(e_{yw},e_x)+B(e_{xy},e_x)e_{yw}.
		\end{align*}
		If $y=x$, then
		\begin{align*}
		e_{xy}B(e_{yw},e_x)+B(e_{xy},e_x)e_{yw}&=e_xB(e_{xw},e_x)+B(e_x,e_x)e_{xw}\\
		&=\lb(e_{xw},e_x)e_x[e_{xw},e_x]=-\lb(e_{xw},e_x)e_{xw}\\
		&=\lb(e_{xw},e_x)[e_{xw},e_x]=B(e_{xw},e_{uv}).
		\end{align*}
		Otherwise, using the symmetry of $\lb$ and \cref{lb(e_xy_e_uv)=lb(e_uv_e_u'v')} we have
		\begin{align*}
		e_{xy}B(e_{yw},e_x)+B(e_{xy},e_x)e_{yw}&=e_{xy}\lb(e_{yw},e_x)[e_{yw},e_x]+\lb(e_{xy},e_x)[e_{xy},e_x]e_{yw}\\
		&=-\lb(e_{xy},e_x)e_{xy}e_{yw}=-\lb(e_{xy},e_x)e_{xw}\\
		&=-\lb(e_x,e_{xy})e_{xw}=-\lb(e_x,e_{xw})e_{xw}\\
		&=-\lb(e_{xw},e_x)e_{xw}=\lb(e_{xw},e_x)[e_{xw},e_x]\\
		&=B(e_{xw},e_{uv}).
		\end{align*}
		
		\textbf{Case 1.7.} $x<w$ and $u=v\not\in\{x,w\}$. Then $B(e_{xw},e_{uv})=0$, as $[e_{xw},e_{uv}]=0$. Furthermore,
		\begin{align*}
		e_{xy}B(e_{zw},e_{uv})+B(e_{xy},e_{uv})e_{zw}&=e_{xy}B(e_{yw},e_u)+B(e_{xy},e_u)e_{yw}.
		\end{align*} 
		If $y=u$, then using he symmetry of $\lb$ and \cref{lb(e_xy_e_uv)=lb(e_uv_e_u'v')} we have
		\begin{align*}
		e_{xy}B(e_{yw},e_u)+B(e_{xy},e_u)e_{yw}&=e_{xy}B(e_{yw},e_y)+B(e_{xy},e_y)e_{yw}\\
		&=e_{xy}\lb(e_{yw},e_y)[e_{yw},e_y]+\lb(e_{xy},e_y)[e_{xy},e_y]e_{yw}\\
		&=-\lb(e_{yw},e_y)e_{xy}e_{yw}+\lb(e_{xy},e_y)e_{xy}e_{yw}\\
		&=(\lb(e_{xy},e_y)-\lb(e_{yw},e_y))e_{xw}\\
		&=(\lb(e_{xy},e_y)-\lb(e_y,e_{yw}))e_{xw}=0.
		\end{align*}
		Otherwise, 
		\begin{align*}
		e_{xy}B(e_{yw},e_u)+B(e_{xy},e_u)e_{yw}=e_{xy}\lb(e_{yw},e_u)[e_{yw},e_u]+\lb(e_{xy},e_u)[e_{xy},e_u]e_{yw}=0.
		\end{align*}
		
		\textbf{Case 1.8.} $u<v=x<w$. Then
		\begin{align*}
		e_{xy}B(e_{zw},e_{uv})+B(e_{xy},e_{uv})e_{zw}&=e_{xy}B(e_{yw},e_{ux})+B(e_{xy},e_{ux})e_{yw}\\
		&=\lb(e_{yw},e_{ux})e_{xy}[e_{yw},e_{ux}]+\lb(e_{xy},e_{ux})[e_{xy},e_{ux}]e_{yw}\\
		&=\lb(e_{yw},e_{ux})e_{xy}[e_{yw},e_{ux}]-\lb(e_{xy},e_{ux})e_{uw}.
		\end{align*}
		Since $e_{xy}[e_{yw},e_{ux}]=-\delta_{xy}e_{xy}e_{uw}=0$, then using the symmetry of $\lb$ and \cref{lb(e_xy_e_uv)=lb(e_uv_e_u'v')} we have
		\begin{align*}
		\lb(e_{yw},e_{ux})e_{xy}[e_{yw},e_{ux}]-\lb(e_{xy},e_{ux})e_{uw}&=-\lb(e_{xy},e_{ux})e_{uw}=-\lb(e_{ux},e_{xy})e_{uw}\\
		&=-\lb(e_{ux},e_{xw})e_{uw}=-\lb(e_{xw},e_{ux})e_{uw}\\
		&=\lb(e_{xw},e_{ux})[e_{xw},e_{ux}]=B(e_{xw},e_{uv}).
		\end{align*}
		
		\textbf{Case 1.9.} $x<w=u<v$. Then
		\begin{align*}
		e_{xy}B(e_{zw},e_{uv})+B(e_{xy},e_{uv})e_{zw}&=e_{xy}B(e_{yu},e_{uv})+B(e_{xy},e_{uv})e_{yu}\\
		&=\lb(e_{yu},e_{uv})e_{xy}[e_{yu},e_{uv}]+\lb(e_{xy},e_{uv})[e_{xy},e_{uv}]e_{yu}\\
		&=\lb(e_{yu},e_{uv})e_{xv}+\lb(e_{xy},e_{uv})[e_{xy},e_{uv}]e_{yu}.
		\end{align*}
		Since $[e_{xy},e_{uv}]e_{yu}=\delta_{yu}e_{xv}e_{yu}=0$, then in view of \cref{lb(e_xy_e_uv)=lb(e_uv_e_u'v')} we obtain
		\begin{align*}
		\lb(e_{yu},e_{uv})e_{xv}+\lb(e_{xy},e_{uv})[e_{xy},e_{uv}]e_{yu}&=\lb(e_{yu},e_{uv})e_{xv}=\lb(e_{xy},e_{yv})e_{xv}\\
		&=\lb(e_{xy},e_{yv})[e_{xy},e_{yv}]=B(e_{xw},e_{uv}).
		\end{align*}
		
		\textbf{Case 1.10.} $x=u<v=w$. Then
		\begin{align*}
		e_{xy}B(e_{zw},e_{uv})+B(e_{xy},e_{uv})e_{zw}&=e_{xy}B(e_{yv},e_{xv})+B(e_{xy},e_{xv})e_{yv}\\
		&=\lb(e_{yv},e_{xv})e_{xy}[e_{yv},e_{xv}]+\lb(e_{xy},e_{xv})[e_{xy},e_{xv}]e_{yv}\\
		&=-\lb(e_{yv},e_{xv})\delta_{yv}e_{xy}e_{xv}+\lb(e_{xy},e_{xv})\delta_{xy}e_{xv}e_{yv}\\
		&=0=B(e_{xw},e_{uv}).
		\end{align*}
		
		\textbf{Case 1.11.} $x<w$, $u<v$ and $\{x,w\}\cap\{u,v\}=\emptyset$. Then $B(e_{xw},e_{uv})=0$, as $[e_{xw},e_{uv}]=0$. Furthermore,
		\begin{align*}
		e_{xy}B(e_{zw},e_{uv})+B(e_{xy},e_{uv})e_{zw}&=e_{xy}B(e_{yw},e_{uv})+B(e_{xy},e_{uv})e_{yw}\\
		&=\lb(e_{yw},e_{uv})e_{xy}[e_{yw},e_{uv}]+\lb(e_{xy},e_{uv})[e_{xy},e_{uv}]e_{yw}\\
		&=-\lb(e_{yw},e_{uv})\delta_{yv}e_{xy}e_{uw}+\lb(e_{xy},e_{uv})\delta_{yu}e_{xv}e_{yw}=0.
		\end{align*}

		\textbf{Case 2.} $y\ne z$. Then $e_{xy}e_{zw}=0$, so $B(e_{xy}e_{zw},e_{uv})=0$. 
		
		\textbf{Case 2.1.} $w=u$ and $x=v$. Then $e_{xy}[e_{zw},e_{uv}]=[e_{xy},e_{uv}]e_{zw}=0$, so $e_{xy}B(e_{zw},e_{uv})=B(e_{xy},e_{uv})e_{zw}=0$.
		
		\textbf{Case 2.2.} $w=u$, $x\ne v$ and $y\ne u$. Then $e_{xy}[e_{zw},e_{uv}]=[e_{xy},e_{uv}]e_{zw}=0$.
		
		\textbf{Case 2.3.} $w=u$, $x\ne v$ and $y=u$. Then $e_{xy}[e_{zw},e_{uv}]=0$ and $[e_{xy},e_{uv}]e_{zw}=e_{xv}e_{zw}$. If $v=z$, then it follows from $z\le w=u\le v$ that $z=u=y$, a contradiction. So, $v\ne z$, whence $e_{xv}e_{zw}=0$.
		
		\textbf{Case 2.4.} $w\ne u$, $x=v$ and $z\ne v$. Then $[e_{xy},e_{uv}]e_{zw}=e_{xy}[e_{zw},e_{uv}]=0$.
		
		\textbf{Case 2.5.} $w\ne u$, $x=v$ and $z=v$. Then $[e_{xy},e_{uv}]e_{zw}=0$ and $e_{xy}[e_{zw},e_{uv}]=-e_{xy}e_{uw}$. If $y=u$, then it follows from $u\le v=x\le y$ that $y=v=z$, a contradiction. So $y\ne u$, whence $e_{xy}e_{uw}=0$.
		
		\textbf{Case 2.6.} $w\ne u$, $x\ne v$, $y\ne u$ and $z\ne v$. Then $[e_{zw},e_{uv}]=[e_{xy},e_{uv}]=0$.
		
		\textbf{Case 2.7.} $w\ne u$, $x\ne v$, $y\ne u$ and $z=v$. Then $[e_{xy},e_{uv}]=0$ and $e_{xy}[e_{zw},e_{uv}]=-e_{xy}e_{uw}=0$.
		
		\textbf{Case 2.8.} $w\ne u$, $x\ne v$, $y=u$ and $z\ne v$. Then $[e_{zw},e_{uv}]=0$ and $[e_{xy},e_{uv}]e_{zw}=e_{xv}e_{zw}=0$.
		
		\textbf{Case 2.9.} $w\ne u$, $x\ne v$, $y=u$ and $z=v$. Then using the symmetry of $\lb$ and \cref{lb(e_xy_e_uv)=lb(e_uv_e_u'v')} we have
		\begin{align*}
		e_{xy}B(e_{zw},e_{uv})+B(e_{xy},e_{uv})e_{zw}&=\lb(e_{zw},e_{yz})e_{xy}[e_{zw},e_{yz}]+\lb(e_{xy},e_{yz})[e_{xy},e_{yz}]e_{zw}\\
		&=(\lb(e_{xy},e_{yz})-\lb(e_{zw},e_{yz}))e_{xw}\\
		&=(\lb(e_{xy},e_{yz})-\lb(e_{yz},e_{zw}))e_{xw}=0.
		\end{align*} 
		
	\end{proof}
	
	
	\subsubsection{The action of $B$ on an arbitrary pair of elements}
	
	Denote by $P^2_<$ the set $\{(x,y)\in P^2\mid x<y\}$.
	\begin{defn}
		A map $\sg:P^2_<\to R$ is said to be \textit{constant on chains} if for any chain $C\sst P$ and for all $x<y$, $u<v$ from $C$ one has $\sg(x,y)=\sg(u,v)$.
	\end{defn}

	\begin{thrm}\label{antisymm-bider-of-tl-I}
		There is a one-to-one correspondence between the antisymmetric biderivations $B$ of $\tilde I(P,R)$ with values in $FI(P,R)$ and the maps $\sg:P^2_<\to R$ which are constant on chains. More precisely, 
		\begin{align}\label{B(f_g)(x_y)=sg(x_y)[f_g](x_y)}
		B(f,g)(x,y)=
		\begin{cases}
		0, & x=y,\\
		\sg(x,y)[f,g](x,y), & x<y,
		\end{cases}
		\end{align}
		where $f,g\in \tilde I(P,R)$.
	\end{thrm}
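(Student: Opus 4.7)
My plan is to package \cref{B=lb[]}, \cref{lb-constant-on-chains}, and \cref{lb-const-on-chains} into a single bijection between $B$ and $\sg$. The key geometric observation driving the reduction is that whenever $[e_{x'y'},e_{u'v'}]=\pm e_{ab}$ is nonzero, all four indices $x',y',u',v'$ automatically lie in the chain between $a$ and $b$; combined with \cref{lb-constant-on-chains}, this forces $\lb(e_{x'y'},e_{u'v'})$ to depend only on the pair $(a,b)\in P^2_<$, so all the information in $B$ collapses into a single function $\sg:P^2_<\to R$.

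In the forward direction I would start from an antisymmetric biderivation $B$, invoke \cref{B=lb[]} to obtain $\lb$, and set
\[
\sg(x,y):=\lb(e_x,e_{xy})=B(e_x,e_{xy})(x,y)
\]
for $x<y$ (using Case 2.1 of \cref{B=lb[]}). That $\sg$ is constant on chains is exactly the identity \cref{lb(e_x_e_xy)=lb(e_u_e_uv)} already verified inside the proof of \cref{lb-constant-on-chains}, so no new computation is needed. To establish formula \cref{B(f_g)(x_y)=sg(x_y)[f_g](x_y)}, $R$-bilinearity reduces matters to $f=e_{x'y'}$, $g=e_{u'v'}$. The diagonal case $x=y$ is immediate since every basis commutator has zero diagonal. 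For $x<y$, if $[e_{x'y'},e_{u'v'}]=0$ both sides vanish; otherwise the chain-containment observation above together with \cref{lb-constant-on-chains} gives $\lb(e_{x'y'},e_{u'v'})=\lb(e_a,e_{ab})=\sg(a,b)$, and evaluating at $(x,y)$ yields the claimed equality by inspection (both sides equal $\pm\sg(a,b)$ when $(x,y)=(a,b)$ and vanish otherwise).

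Conversely, given $\sg$ constant on chains, I would define $\lb(e_{x'y'},e_{u'v'}):=\sg(a,b)$ whenever $[e_{x'y'},e_{u'v'}]=\pm e_{ab}$ with $a<b$, and $\lb=0$ otherwise. This $\lb$ is symmetric and satisfies condition \cref{lb(e_xy_e_uv)=lb(e_uv_e_u'v')}, so \cref{lb-const-on-chains} produces an antisymmetric biderivation $B$ of $\tl I(P,R)$ with values in $FI(P,R)$, given by \cref{B(e_xy_e_uv)=lb(e_xy_e_uv)[e_xy_e_uv]}, and the same computation as above shows that it satisfies \cref{B(f_g)(x_y)=sg(x_y)[f_g](x_y)}. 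The two passages invert one another: starting from $\sg$ and recomputing $\lb(e_x,e_{xy})$ returns $\sg(x,y)$ by construction, while starting from $B$ and reassembling recovers $B$ because formula \cref{B(f_g)(x_y)=sg(x_y)[f_g](x_y)} pins down $B$ on every pair from $E$ and hence on all of $\tl I(P,R)$ by bilinearity. The only step requiring any real thought is the chain-containment observation for nonzero basis commutators; everything else is bookkeeping.
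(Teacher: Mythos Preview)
Your proposal is correct and follows essentially the same architecture as the paper's proof: both directions invoke \cref{B=lb[]}, \cref{lb-constant-on-chains}, and \cref{lb-const-on-chains} in the same way, and the bijection is set up identically via $\sg(x,y)=\lb(e_x,e_{xy})$. The only difference is cosmetic: in verifying \cref{B(f_g)(x_y)=sg(x_y)[f_g](x_y)} the paper expands $B(f,g)$ as a double sum over $E$ and collects terms, whereas you reduce by bilinearity to a single basis pair and use your chain-containment observation to identify $\lb(e_{x'y'},e_{u'v'})$ with $\sg(a,b)$ directly---same content, slightly cleaner packaging.
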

	\begin{proof}
		Let $B$ be an antisymmetric biderivation of $\tilde I(P,R)$ with values in $FI(P,R)$. Define
		\begin{align*}
		\sg(x,y)=\lb(e_x,e_{xy}).
		\end{align*}
		Then for any chain $C\sst P$ and for all $x<y$, $u<v$ from $C$ we have $\sg(x,y)=\sg(u,v)$ by \cref{lb(e_x_e_xy)=lb(e_u_e_uv)},
		so $\sg$ is constant on chains. Now, given $f,g\in \tilde I(P,R)$, we write $f=\sum_{x\le y}f(x,y)e_{xy}$ and $g=\sum_{x\le y}g(x,y)e_{xy}$, so that by \cref{B=lb[]}
		\begin{align*}
		B(f,g)&=\sum_{x\le y}\sum_{u\le v}f(x,y)g(u,v)B(e_{xy},e_{uv})\\
		&=\sum_{x\le y}\sum_{u\le v}f(x,y)g(u,v)\lb(e_{xy},e_{uv})[e_{xy},e_{uv}],
		\end{align*}
		where we assume that $\lb$ is symmetric in view of \cref{lb-symm}.
		
		Since $[e_{xy},e_{uv}]\ne 0$ implies $y=u$ or $x=v$, and $[e_{xy},e_{uv}]=0$ whenever $y=u$ and $x=v$, we conclude that
		\begin{align*}
		B(f,g)&=\sum_{x\le y\le v}f(x,y)g(y,v)\lb(e_{xy},e_{yv})e_{xv}-\sum_{u\le x\le y}f(x,y)g(u,x)\lb(e_{xy},e_{ux})e_{uy}\\
		&=\sum_{x\le z\le y}\left(f(x,z)g(z,y)\lb(e_{xz},e_{zy})-g(x,z)f(z,y)\lb(e_{zy},e_{xz})\right)e_{xy}\\
		&=\sum_{x\le z\le y}\lb(e_{xz},e_{zy})\left(f(x,z)g(z,y)-g(x,z)f(z,y)\right)e_{xy}.
		\end{align*}
		If $x=y$, then we get $B(f,g)(x,y)=0$. If $x<y$, then considering the chain $x<z<y$ and using \cref{lb(e_xy_e_uv)=lb(e_uv_e_u'v')} we see that $\lb(e_{xz},e_{zy})=\lb(e_x,e_{xy})=\sg(x,y)$, whence
		\begin{align*}
		B(f,g)(x,y)&=\sg(x,y)\sum_{x\le z\le y}\left(f(x,z)g(z,y)-g(x,z)f(z,y)\right)\\
		&=\sg(x,y)((fg)(x,y)-(gf)(x,y))=\sg(x,y)[f,g](x,y).
		\end{align*}
		
		Conversely, assume that $B$ be given by \cref{B(f_g)(x_y)=sg(x_y)[f_g](x_y)}. It is immediately seen that $B$ is an antisymmetric bilinear map. Taking $x<y$, we define $\lb(e_x,e_{xy})=\lb(e_{xy},e_x):=\sg(x,y)$ and $\lb(e_{xz},e_{zy})=\lb(e_{zy},e_{xz}):=\lb(e_x,e_{xy})$ for all $x\le z\le y$. 
		Then $\lb$ is defined on all pairs $(e_{xy},e_{uv})$ with $[e_{xy},e_{uv}]\ne 0$, and it is symmetric on all such pairs. Moreover, 
		\begin{align*}
		B(e_x,e_{xy})(u,v)=\sg(u,v)[e_x,e_{xy}](u,v)=\sg(u,v)e_{xy}(u,v)=
		\begin{cases}
		\sg(x,y), & (x,y)=(u,v),\\
		0,        & (x,y)\ne(u,v),
		\end{cases}
		\end{align*}
		whence $B(e_x,e_{xy})=\sg(x,y)e_{xy}=\sg(x,y)[e_x,e_{xy}]=\lb(e_x,e_{xy})[e_x,e_{xy}]$.
		Similarly, $B(e_{xy},e_y)=\sg(x,y)e_{xy}=\lb(e_{xy},e_y)[e_{xy},e_y]$ and $B(e_{xz},e_{zy})=\sg(x,y)e_{xy}=\lb(e_{xz},e_{zy})[e_{xz},e_{zy}]$, so that \cref{B(e_xy_e_uv)=lb(e_xy_e_uv)[e_xy_e_uv]} holds. Finally, \cref{lb(e_xy_e_uv)=lb(e_uv_e_u'v')} follows from the definition of $\lb$ and the fact that $\sg$ is constant on chains.
	\end{proof}
	
	\subsection{Antisymmetric biderivations of $FI(P,R)$}
	
	\begin{defn}\label{f|_x^y-defn}
		Let $f\in FI(P,R)$ and $x\le y$. We define as in \cite{KW}
		\begin{align}\label{f|_x^y}
		f|_x^y=f(x,y)e_{xy}+\sum_{x\le v<y}f(x,v)e_{xv}+\sum_{x<u\le y}f(u,y)e_{uy}.
		\end{align}
		Obviously, $f|_x^y\in \tilde I(P,R)$.
	\end{defn}
	
	\begin{lem}\label{B(f_g)(x_y)=B(restr-of-f-and-g)(x_y)}
		Let $B$ be a biderivation of $FI(P,R)$. Then for all $f,g\in FI(P,R)$ and $x\le y$
		\begin{align}\label{B(f_g)(x_y)=B(f|_x^y_g|_x^y)(x_y)}
		B(f,g)(x,y)=B(f|_x^y,g|_x^y)(x,y).
		\end{align}
	\end{lem}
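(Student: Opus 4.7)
The plan is to use the Leibniz rule in the first variable to express the $(x,y)$-coefficient of $B(h,g)$ through only those coefficients of $h$ that survive the restriction $(\,\cdot\,)|_x^y$. First I apply $B(e_x h,g)=B(e_x,g)h+e_x B(h,g)$ to isolate $e_x B(h,g)$, then apply the Leibniz rule again to $B(e_x h\cdot e_y,g)$ and solve, obtaining
\[
e_x B(h,g) e_y = B(e_x h e_y,g)-e_x h B(e_y,g)-B(e_x,g) h e_y.
\]
By \cref{e_x-alpha-e_y}, $e_x h e_y=h(x,y) e_{xy}$, so the first term on the right becomes $h(x,y) B(e_{xy},g)$.

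Next I plan to read off the $(x,y)$-coefficient of both sides. The left-hand side equals $B(h,g)(x,y)$, again by \cref{e_x-alpha-e_y}, while the convolution formula \cref{conv-series} turns the two remaining products into finite sums, giving
\[
B(h,g)(x,y) = h(x,y) B(e_{xy},g)(x,y) - \sum_{x\le w\le y}\bigl(h(x,w) B(e_y,g)(w,y) + B(e_x,g)(x,w) h(w,y)\bigr).
\]
The right-hand side involves $h$ only through the values $h(x,y)$, $h(x,w)$ and $h(w,y)$ with $x\le w\le y$, which are exactly the coefficients recorded by $h|_x^y$ in \cref{f|_x^y}. Since $(h|_x^y)|_x^y = h|_x^y$, this already forces $B(h,g)(x,y)=B(h|_x^y,g)(x,y)$ for every $h\in FI(P,R)$.

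A completely symmetric computation, starting from the Leibniz rule in the second variable, yields the companion identity $B(f,g)(x,y)=B(f,g|_x^y)(x,y)$ for every $g\in FI(P,R)$. Applying the two observations in turn gives $B(f,g)(x,y)=B(f|_x^y,g)(x,y)=B(f|_x^y,g|_x^y)(x,y)$, which is \cref{B(f_g)(x_y)=B(f|_x^y_g|_x^y)(x_y)}. I do not expect any serious obstacle: the only subtlety worth mentioning is that the intermediate expressions involve products of finitary series, but each step is just the Leibniz identity applied to an honest product of three elements of $FI(P,R)$, and by \cref{conv-series} the $(x,y)$-coefficient of any such product is the finite sum displayed above.
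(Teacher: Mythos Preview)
Your argument is correct and follows the same two-step outline as the paper: first replace $f$ by $f|_x^y$ using that $B(-,g)$ is a derivation, then replace $g$ by $g|_x^y$ using that $B(f|_x^y,-)$ is a derivation. The paper simply cites this reduction from \cite[Lemma~2.4]{KW} and \cite[Lemma~8]{Khripchenko12}, whereas you reprove it in place via the identity
\[
e_xB(h,g)e_y=B(e_xhe_y,g)-e_xh\,B(e_y,g)-B(e_x,g)\,he_y,
\]
which is exactly the computation underlying those cited lemmas. So the approaches coincide; yours is just self-contained.

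One small slip: when you write ``the first term on the right becomes $h(x,y)B(e_{xy},g)$'' you are using $R$-linearity of $B$ in its first argument, but the lemma is stated for biderivations, which in this paper are only assumed biadditive. This does not damage the proof: all you actually need is that $e_xhe_y=e_x(h|_x^y)e_y$ as elements of $FI(P,R)$, so the term $B(e_xhe_y,g)$ is literally unchanged under $h\mapsto h|_x^y$. Likewise the $(x,y)$-coefficients of $e_xh\,B(e_y,g)$ and $B(e_x,g)\,he_y$ involve only $h(x,w)$ and $h(w,y)$ with $x\le w\le y$, which agree with those of $h|_x^y$. So simply drop the scalar extraction and phrase the conclusion as ``each term on the right is unchanged when $h$ is replaced by $h|_x^y$''; the rest stands as written.
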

	\begin{proof}
		Since $B(-,g)$ is a derivation of $FI(P,R)$ we have $B(f,g)(x,y)=B(f|_x^y,g)(x,y)$ by \cite[Lemma 2.4]{KW} (case $x<y$) and \cite[Lemma 8]{Khripchenko12} (case $x=y$). Now, since $B(f|_x^y,-)$ is a derivation of $FI(P,R)$, we similarly obtain $B(f|_x^y,g)(x,y)=B(f|_x^y,g|_x^y)(x,y)$.
	\end{proof}

	\begin{thrm}\label{descr-of-bider-FI}
		Antisymmetric biderivations of $FI(P,R)$ are exactly the maps $B$ of the form \cref{B(f_g)(x_y)=sg(x_y)[f_g](x_y)}, in which $f,g\in FI(P,R)$.
	\end{thrm}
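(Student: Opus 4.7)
The plan is to reduce to \cref{antisymm-bider-of-tl-I} via \cref{B(f_g)(x_y)=B(restr-of-f-and-g)(x_y)}, which reconstructs $B(f,g)(x,y)$ from the elements $f|_x^y, g|_x^y \in \tilde I(P,R)$; this reduces the classification on $FI(P,R)$ to the one already obtained on the subalgebra $\tilde I(P,R)$.

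For the ``only if'' direction, I would restrict a given antisymmetric biderivation $B$ of $FI(P,R)$ to $\tilde I(P,R)$. The restriction is an antisymmetric biderivation of $\tilde I(P,R)$ with values in $FI(P,R)$, so \cref{antisymm-bider-of-tl-I} supplies a map $\sg\colon P^2_< \to R$ that is constant on chains and for which \cref{B(f_g)(x_y)=sg(x_y)[f_g](x_y)} holds on $\tilde I(P,R)$. For arbitrary $f,g \in FI(P,R)$ and $x \le y$, \cref{B(f_g)(x_y)=B(restr-of-f-and-g)(x_y)} gives $B(f,g)(x,y)=B(f|_x^y,g|_x^y)(x,y)$, and the right-hand side equals $\sg(x,y)[f|_x^y,g|_x^y](x,y)$ (or $0$ if $x=y$) by what was just established. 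A short inspection of \cref{f|_x^y} shows that $f|_x^y(x,z)=f(x,z)$ and $g|_x^y(z,y)=g(z,y)$ for every $x \le z \le y$, whence $(f|_x^y g|_x^y)(x,y)=(fg)(x,y)$, and symmetrically with $f,g$ swapped, so $[f|_x^y,g|_x^y](x,y)=[f,g](x,y)$ and the required formula follows.

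For the ``if'' direction, I would start with any $\sg$ constant on chains and define $B$ by \cref{B(f_g)(x_y)=sg(x_y)[f_g](x_y)}. Finitariness of $B(f,g)$ is inherited from that of $[f,g]$, while $R$-bilinearity and antisymmetry are immediate. The remaining task is to verify the Leibniz rule $B(fg,h)=B(f,h)g+fB(g,h)$ entrywise. At $(x,y)$ with $x<y$, expanding both sides reduces the identity to
\begin{align*}
\sg(x,y)\sum_{x\le z\le y}\bigl(f(x,z)[g,h](z,y)+[f,h](x,z)g(z,y)\bigr) \\
= \sum_{x\le z<y} f(x,z)\sg(z,y)[g,h](z,y) + \sum_{x<z\le y}\sg(x,z)[f,h](x,z)g(z,y),
\end{align*}
where the endpoints $z=y$ and $z=x$ on the right are omitted because $[g,h](y,y)=[f,h](x,x)=0$ (commutativity of $R$). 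For the surviving indices, $\{x,z,y\}$ is a chain, so the chain-constancy of $\sg$ yields $\sg(x,y)=\sg(z,y)$ for $x \le z < y$ and $\sg(x,y)=\sg(x,z)$ for $x < z \le y$, matching the two sides. The case $x=y$ is trivial since both sides of the Leibniz rule vanish at $(x,x)$.

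The main obstacle is conceptual rather than computational: once one recognises that \cref{B(f_g)(x_y)=B(restr-of-f-and-g)(x_y)} is precisely the tool needed to transfer the classification from $\tilde I(P,R)$ to all of $FI(P,R)$, the rest is bookkeeping. The step that requires the most care is the Leibniz verification in the converse, where the hypothesis ``$\sg$ is constant on chains'' must be invoked exactly to commute the scalar $\sg(x,y)$ past the convolution sum.
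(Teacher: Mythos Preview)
Your proposal is correct and follows essentially the same approach as the paper: both directions use \cref{B(f_g)(x_y)=B(restr-of-f-and-g)(x_y)} to reduce to \cref{antisymm-bider-of-tl-I}, and the converse is handled by an entrywise Leibniz check exploiting chain-constancy of $\sg$ together with finitariness inherited from $[f,g]$. The only cosmetic difference is that the paper cites \cite[Lemma~2.3(ii)]{KW} for $[f|_x^y,g|_x^y](x,y)=[f,g](x,y)$, whereas you verify this directly from \cref{f|_x^y}.
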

	\begin{proof}
		Let $B$ be an antisymmetric biderivations of $FI(P,R)$. Then
		\begin{center}$B(f,g)(x,y)=B(f|_x^y,g|_x^y)(x,y)$
		\end{center}by \cref{B(f_g)(x_y)=B(restr-of-f-and-g)(x_y)}. If $x=y$, then $B(f|_x^y,g|_x^y)(x,y)=0$ by \cref{antisymm-bider-of-tl-I}. If $x<y$, then $B(f|_x^y,g|_x^y)(x,y)=\sg(x,y)[f|_x^y,g|_x^y](x,y)$ by the same \cref{antisymm-bider-of-tl-I}. However, $[f|_x^y,g|_x^y](x,y)=[f,g](x,y)$ thanks to \cite[Lemma 2.3 (ii)]{KW}.
		
		Conversely, let $B$ be the map $FI(P,R)\times FI(P,R)\to I(P,R)$ defined by \cref{B(f_g)(x_y)=sg(x_y)[f_g](x_y)}, where $f,g\in FI(P,R)$ and $\sg:P^2_<\to R$ is constant on chains. Clearly, $B$ is a bilinear antisymmetric map. Now, using the facts that the commutator is a biderivation and $\sg$ is constant on chains, for all $x<y$ we have
		\begin{align*}
		B(fg,h)(x,y)&=\sg(x,y)[fg,h](x,y)=\sg(x,y)(f[g,h]+[f,h]g)(x,y)\\
		&=\sg(x,y)\left(\sum_{x\le z\le y}f(x,z)[g,h](z,y)+\sum_{x\le z\le y}[f,h](x,z)g(z,y)\right)\\
		&=\sg(x,y)\left(\sum_{x\le z<y}f(x,z)[g,h](z,y)+\sum_{x<z\le y}[f,h](x,z)g(z,y)\right)\\
		&=\sum_{x\le z<y}f(x,z)\sg(z,y)[g,h](z,y)+\sum_{x<z\le y}\sg(x,z)[f,h](x,z)g(z,y)\\
		&=\sum_{x\le z\le y}f(x,z)B(g,h)(z,y)+\sum_{x\le z\le y}B(f,h)(x,z)g(z,y)\\
		&=(fB(g,h)+B(f,h)g)(x,y).
		\end{align*}
		Obviously, 
		\begin{align*}
		(fB(g,h)+B(f,h)g)(x,x)=f(x,x)B(g,h)(x,x)+B(f,h)(x,x)g(x,x)=0,
		\end{align*}
		so $B(fg,h)(x,y)=(fB(g,h)+B(f,h)g)(x,y)$ for all $x\le y$, and thus $B$ is a biderivation of $FI(P,R)$ with values in $I(P,R)$. It only remains to show that $B(f,g)\in FI(P,R)$. Suppose that there are $x<y$ and $x\le u_i<v_i\le y$, $i\in I$, such that $B(f,g)(u_i,v_i)\ne 0$, where $I$ is infinite. Then $\sg(u_i,v_i)[f,g](u_i,v_i)\ne 0$ implies $[f,g](u_i,v_i)\ne 0$. But this contradicts the fact that $[f,g]\in FI(P,R)$.
	\end{proof}
	
	\begin{cor}
		Let $P$ be a poset, in which the intersection of any two maximal chains has cardinality at least $2$. Then each antisymmetric biderivation of $FI(P,R)$ is inner.
	\end{cor}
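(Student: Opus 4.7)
The plan is to invoke \cref{descr-of-bider-FI} to reduce the claim to a statement about the associated map $\sg:P^2_<\to R$, then use the chain-intersection hypothesis to force $\sg$ to be globally constant.

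First, I fix an antisymmetric biderivation $B$ of $FI(P,R)$. By \cref{descr-of-bider-FI}, there is a map $\sg:P^2_<\to R$ that is constant on chains, such that $B(f,g)(x,y)=\sg(x,y)[f,g](x,y)$ for $x<y$ and $B(f,g)(x,x)=0$. The goal is to show that $\sg$ takes a single value $r\in R$, as then $B(f,g)=r[f,g]=(r\delta)[f,g]$ with $r\delta\in C(FI(P,R))$, making $B$ inner in the sense of \cref{prelim}.

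Next, I show that $\sg$ is constant. Given any pairs $(x,y),(u,v)\in P^2_<$, I extend the two-element chains $\{x,y\}$ and $\{u,v\}$, via Zorn's lemma, to maximal chains $C_1,C_2\sst P$. By the hypothesis $|C_1\cap C_2|\ge 2$, so I can pick elements $a<b$ in $C_1\cap C_2$; note that these two elements are automatically comparable since they lie in the chain $C_1$ (or $C_2$). Applying the ``constant on chains'' property of $\sg$ to $C_1$ gives $\sg(x,y)=\sg(a,b)$, and applying it to $C_2$ gives $\sg(a,b)=\sg(u,v)$. Hence $\sg\equiv r$ for some fixed $r\in R$.

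Finally, since $R$ is commutative, $[f,g](x,x)=f(x,x)g(x,x)-g(x,x)f(x,x)=0$ for every $x\in P$ and $f,g\in FI(P,R)$. Therefore the identity $B(f,g)(x,y)=r[f,g](x,y)$ extends from $x<y$ to all $x\le y$, giving $B(f,g)=r[f,g]=(r\delta)[f,g]$, which is the inner biderivation associated with $r\delta\in C(FI(P,R))$. No real technical obstacle arises: the only subtlety is ensuring that the two extracted elements of $C_1\cap C_2$ form a pair in $P^2_<$, which is immediate because the intersection of two chains is a chain.
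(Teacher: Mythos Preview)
Your proof is correct and follows essentially the same route as the paper: invoke \cref{descr-of-bider-FI} to get the chain-constant map $\sg$, embed the two given pairs into maximal chains, and use the hypothesized two-point intersection as a bridge to force $\sg$ to be globally constant. You are merely more explicit than the paper about the use of Zorn's lemma to reach maximal chains and about the diagonal case $x=y$ when passing from $\sg$ to the inner form $r\delta[\,\cdot\,,\,\cdot\,]$.
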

	\begin{proof}
		For all $x<y$ and $u<v$ there are maximal chains $C,D\sst P$, such that $x,y\in C$ and $u,v\in D$. Let $z<w$ be such that $z,w\in C\cap D$. If $\sg:P^2_<\to R$ is constant on $C$ and $D$, we conclude that $\sg(x,y)=\sg(z,w)=\sg(u,v)$.
	\end{proof}
	

	\section{Poisson structures on $FI(P,R)$}\label{Poisson-FI}
	
	The next proposition shows that each antisymmetric biderivation of $FI(P,R)$ with values in $FI(P,R)$ satisfies the Jacobi identity.
	
	\begin{prop}\label{B-satisfies-Jacobi}
		Let $B$ be an antisymmetric biderivation of $FI(P,R)$ with values in $FI(P,R)$. Then for all $f,g,h\in FI(P,R)$ one has
		\begin{align}\label{Jacobi-for-B-general}
		B(f,B(g,h))+B(g,B(h,f))+B(h,B(f,g))=0.
		\end{align}
	\end{prop}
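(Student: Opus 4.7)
The plan is to reduce the Jacobi identity for $B$ to the Jacobi identity for the commutator, exploiting the explicit description of $B$ from \cref{descr-of-bider-FI}. Concretely, since $B(f,g)(x,y)=\sigma(x,y)[f,g](x,y)$ for $x<y$ and $B(f,g)(x,x)=0$, it suffices to check \cref{Jacobi-for-B-general} pointwise, i.e.\ evaluate both sides at an arbitrary $(x,y)\in P^2$ with $x\le y$. The case $x=y$ is immediate because $B(\cdot,\cdot)(x,x)=0$ by the formula, so each summand on the left-hand side vanishes at $(x,x)$.

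For $x<y$, first I would apply the defining formula once to get
\begin{align*}
B(f,B(g,h))(x,y)=\sigma(x,y)\,[f,B(g,h)](x,y),
\end{align*}
and then expand the inner commutator as a sum over $x\le z\le y$:
\begin{align*}
[f,B(g,h)](x,y)=\sum_{x\le z\le y}\bigl(f(x,z)B(g,h)(z,y)-B(g,h)(x,z)f(z,y)\bigr).
\end{align*}
The key step is to pull the scalar $\sigma(x,y)$ out of this sum. For $z=x$ only the term $f(x,x)B(g,h)(x,y)=\sigma(x,y)f(x,x)[g,h](x,y)$ survives, and for $z=y$ only $-B(g,h)(x,y)f(y,y)=-\sigma(x,y)[g,h](x,y)f(y,y)$ survives. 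For the interior indices $x<z<y$ we have $B(g,h)(z,y)=\sigma(z,y)[g,h](z,y)$ and $B(g,h)(x,z)=\sigma(x,z)[g,h](x,z)$, and the chain $x<z<y$ together with the hypothesis that $\sigma$ is constant on chains gives $\sigma(x,z)=\sigma(z,y)=\sigma(x,y)$. Therefore
\begin{align*}
[f,B(g,h)](x,y)=\sigma(x,y)\,[f,[g,h]](x,y),
\end{align*}
so that $B(f,B(g,h))(x,y)=\sigma(x,y)^2\,[f,[g,h]](x,y)$.

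The same calculation applied cyclically gives $B(g,B(h,f))(x,y)=\sigma(x,y)^2\,[g,[h,f]](x,y)$ and $B(h,B(f,g))(x,y)=\sigma(x,y)^2\,[h,[f,g]](x,y)$. Summing and invoking the ordinary Jacobi identity for the associative commutator on $FI(P,R)$ yields
\begin{align*}
\bigl(B(f,B(g,h))+B(g,B(h,f))+B(h,B(f,g))\bigr)(x,y)=\sigma(x,y)^2\cdot 0=0,
\end{align*}
which completes the proof. The only delicate point is the pull-out of $\sigma$: it is crucial that constancy on chains applies to each triple $x<z<y$ simultaneously inside the sum, and that the boundary contributions at $z=x$ and $z=y$ already carry the scalar $\sigma(x,y)$ rather than needing an ill-defined $\sigma(x,x)$ or $\sigma(y,y)$. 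Once this bookkeeping is handled cleanly, the Jacobi identity for $B$ is an immediate consequence of the Jacobi identity for $[\,,\,]$.
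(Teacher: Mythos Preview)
Your proof is correct and follows essentially the same approach as the paper: both reduce the Jacobi identity to that of the commutator by evaluating pointwise, using \cref{descr-of-bider-FI} and the constancy of $\sigma$ on chains to obtain $B(f,B(g,h))(x,y)=\sigma(x,y)^2[f,[g,h]](x,y)$. Your treatment of the boundary terms $z=x$ and $z=y$ is a bit more explicit than the paper's, but the argument is the same.
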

	\begin{proof}
		It immediately follows from \cref{descr-of-bider-FI} that
		\begin{align*}
		B(f,B(g,h))(x,x)=B(g,B(h,f))(x,x)=B(h,B(f,g))(x,x)=0.
		\end{align*}
		Let $x<y$. Since $\sg$ is constant on chains, in view of \cref{descr-of-bider-FI} we have
		\begin{align*}
		B(f,B(g,h))(x,y)&=\sg(x,y)[f,B(g,h)](x,y)\\
		&=\sg(x,y)\left(\sum_{x\le z\le y}f(x,z)B(g,h)(z,y)-\sum_{x\le z\le y}B(g,h)(x,z)f(z,y)\right)\\
		&=\sg(x,y)\left(\sum_{x\le z<y}f(x,z)\sg(z,y)[g,h](z,y)-\sum_{x<z\le y}\sg(x,z)[g,h](x,z)f(z,y)\right)\\
		&=\sg(x,y)^2\left(\sum_{x\le z\le y}f(x,z)[g,h](z,y)-\sum_{x\le z\le y}[g,h](x,z)f(z,y)\right)\\
		&=\sg(x,y)^2[f,[g,h]](x,y).
		\end{align*}
		Similarly, $B(g,B(h,f))(x,y)=\sg(x,y)^2[g,[h,f]](x,y)$ and
		\begin{center}$B(h,B(f,g))(x,y)=\sg(x,y)^2[h,[f,g]](x,y)$.
		\end{center}
		Since the commutator satisfies the Jacobi identity, we conclude that 
		\begin{align*}
		&(B(f,B(g,h))+B(g,B(h,f))+B(h,B(f,g)))(x,y)\\
		&\quad=\sg(x,y)^2([f,[g,h]]+[g,[h,f]]+[h,[f,g]])(x,y)=0.
		\end{align*}
	\end{proof}
	
	\begin{cor}
		Each $R$-bilinear antisymmetric biderivation of $FI(P,R)$ defines a Poisson structure on $FI(P,R)$.
	\end{cor}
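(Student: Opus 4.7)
The plan is to observe that this corollary is essentially a one-line consequence of \cref{B-satisfies-Jacobi} combined with the definition of a Poisson structure given in the Preliminaries. Recall that, as stated in \cref{prelim}, a Poisson structure on an associative $R$-algebra $A$ is precisely an $R$-bilinear antisymmetric biderivation of $(A,\cdot)$ that additionally satisfies the Jacobi identity. Thus the statement to be proved reduces to verifying exactly one thing: that the Jacobi identity automatically holds.

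First I would let $B$ be an arbitrary $R$-bilinear antisymmetric biderivation of $FI(P,R)$. By hypothesis $B$ takes values in $FI(P,R)$ (the regular bimodule), so the assumptions of \cref{B-satisfies-Jacobi} are met. Applying that proposition directly yields
\begin{align*}
B(f,B(g,h))+B(g,B(h,f))+B(h,B(f,g))=0
\end{align*}
for all $f,g,h\in FI(P,R)$, which is precisely the Jacobi identity for the bracket $\{f,g\}:=B(f,g)$. Since $(FI(P,R),\cdot)$ is associative by construction and $\{,\}$ is antisymmetric, bilinear and a biderivation with respect to the associative product (which, as noted after \cref{Leibniz-law}, is equivalent to the Leibniz rule), the triple $(FI(P,R),\cdot,\{,\})$ satisfies all the axioms of a Poisson algebra.

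Since there is essentially no obstacle here beyond quoting \cref{B-satisfies-Jacobi} and the definitions, the proof is a two-sentence deduction; the genuine content of the corollary has already been absorbed into \cref{descr-of-bider-FI} (classification of antisymmetric biderivations) and \cref{B-satisfies-Jacobi} (automatic Jacobi identity). The only thing one should be slightly careful about is not to confuse the role of the Leibniz law: the biderivation property of $B$ with respect to $\cdot$ is exactly the Leibniz identity \cref{Leibniz-law}, so no further computation is needed there either.
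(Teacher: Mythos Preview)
Your proposal is correct and matches the paper's approach exactly: the corollary is stated in the paper without proof because it is immediate from \cref{B-satisfies-Jacobi} together with the definition of a Poisson structure recalled in \cref{prelim}. Your write-up simply spells out this immediate deduction.
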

	
	\begin{thrm}\label{Bider-is-Poisson}
		The Poisson structures $B$ on $FI(P,R)$ are in a one-to-one correspondence with the maps $\sg:P^2_<\to R$ which are constant on chains. The correspondence is given by \cref{B(f_g)(x_y)=sg(x_y)[f_g](x_y)}, in which $f,g\in FI(P,R)$.
	\end{thrm}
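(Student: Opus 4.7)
The plan is to obtain \cref{Bider-is-Poisson} as an immediate consequence of the two main results established in the preceding section: \cref{descr-of-bider-FI}, which classifies the antisymmetric biderivations of $FI(P,R)$, and \cref{B-satisfies-Jacobi}, which shows that every such biderivation automatically satisfies the Jacobi identity. Essentially all the technical work has already been done, so the role of this proof is to assemble the pieces.

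First I would unfold the definition of a Poisson structure: an $R$-bilinear antisymmetric biderivation $\{,\}$ of $(FI(P,R),\cdot)$ for which $(FI(P,R),\{,\})$ is a Lie algebra, i.e.\ the Jacobi identity holds. Hence every Poisson structure $B$ on $FI(P,R)$ is in particular an antisymmetric biderivation, so by \cref{descr-of-bider-FI} it must have the form \cref{B(f_g)(x_y)=sg(x_y)[f_g](x_y)} for some $\sg:P^2_<\to R$ constant on chains. Conversely, for any such $\sg$, the map $B$ defined by \cref{B(f_g)(x_y)=sg(x_y)[f_g](x_y)} is an antisymmetric biderivation by the same \cref{descr-of-bider-FI}, and the Jacobi identity holds by \cref{B-satisfies-Jacobi}; therefore $B$ is indeed a Poisson structure.

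To complete the bijection, I would verify that the assignment $\sg\mapsto B$ is injective. For $x<y$ one computes $[e_x,e_{xy}]=e_{xy}$, so $B(e_x,e_{xy})(x,y)=\sg(x,y)\cdot e_{xy}(x,y)=\sg(x,y)$, which shows that $\sg$ is recovered uniquely from $B$. Since all the substantive content lives in \cref{descr-of-bider-FI,B-satisfies-Jacobi}, I do not foresee any obstacle here; the main care required is simply to state the argument in the correct logical order and to observe, as already recorded in the corollary preceding the theorem, that an antisymmetric biderivation of $FI(P,R)$ is automatically a Poisson structure.
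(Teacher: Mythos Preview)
Your proposal is correct and matches the paper's approach: the paper states \cref{Bider-is-Poisson} without a separate proof, since it follows immediately from \cref{descr-of-bider-FI} together with \cref{B-satisfies-Jacobi} (and its Corollary), exactly as you outline. Your explicit injectivity check via $B(e_x,e_{xy})(x,y)=\sg(x,y)$ is a nice addition that the paper leaves implicit in the one-to-one correspondence already established in \cref{antisymm-bider-of-tl-I,descr-of-bider-FI}.
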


	\bibliography{bibl}{}
	\bibliographystyle{acm}
	
\end{document}